\numberwithin{equation}{section}
\theoremstyle{plain}
\newtheorem{theorem}{Theorem}[section]
\newtheorem{corollary}[theorem]{Corollary}
\newtheorem{proposition}[theorem]{Proposition}
\newtheorem{lemma}[theorem]{Lemma}
\theoremstyle{remark}
\newtheorem{remark}[theorem]{Remark}
\theoremstyle{definition}
\newcommand{\HH}{\mathcal{H}}
\newcommand{\II}{\mathcal{I}}
\newcommand{\R}{\mathbb{R}}
\newcommand{\N}{\mathbb{N}}
\newcommand{\hhh}{\mathtt{h}}
\newcommand{\iii}{\mathtt{i}}
\newcommand{\jjj}{\mathtt{j}}
\newcommand{\kkk}{\mathtt{k}}
\newcommand{\eps}{\varepsilon}
\newcommand{\roo}{\varrho}
\newcommand{\yli}[2]{\genfrac{}{}{0pt}{}{#1}{#2}}
\DeclareMathOperator{\dimh}{dim_H}
\DeclareMathOperator{\dimp}{dim_p}
\DeclareMathOperator{\dist}{dist}
\DeclareMathOperator{\diam}{diam}
\DeclareMathOperator{\proj}{proj}
\DeclareMathOperator{\spt}{spt}
\DeclareMathOperator{\udimloc}{\overline{dim}_{loc}}
\DeclareMathOperator{\ldimloc}{\underline{dim}_{loc}}
\begin{document}

\title{Local conical dimensions for measures}

\author{De-Jun Feng}
\address{Department of Mathematics \\
         The Chinese University of Hong Kong \\
         Shatin \\
         Hong Kong}
\email{djfeng@math.cuhk.edu.hk}

\author{Antti K\"aenm\"aki}
\address{Department of Mathematics and Statistics \\
         P.O.\ Box 35 (MaD) \\
         FI-40014 University of Jyv\"askyl\"a \\
         Finland}
\email{antti.kaenmaki@jyu.fi}

\author{Ville Suomala}
\address{Department of Mathematical Sciences\\
         P.O.\ Box 3000\\
         FI-90014 University of Oulu\\
         Finland}
\email{ville.suomala@oulu.fi}

\subjclass[2000]{Primary 28A80; Secondary 28A75, 28A12.}
\keywords{Local dimension, conical density, rectifiability}
\date{\today}

\begin{abstract}
We study the decay of $\mu(B(x,r)\cap C)/\mu(B(x,r))$ as $r\downarrow 0$ for different kinds of measures $\mu$ on $\R^n$ and various cones $C$ around $x$. As an application, we provide sufficient conditions implying that the local dimensions can be calculated via cones almost everywhere.
\end{abstract}

\maketitle

\section{Introduction and notation}

Let $\mu$ be a measure on $\R^n$ and let $C(x) \subset \R^n$ be a cone with a vertex at $x \in \R^n$.
Our motivation for this article stems from the following question: For what types of cones and under what assumptions on the measure we have
\begin{align}
  \label{eq:uconicaldim}
  \limsup_{r \downarrow 0} \frac{\log \mu\bigl( B(x,r) \cap C(x) \bigr)}{\log r} &= \limsup_{r \downarrow 0} \frac{\log \mu\bigl( B(x,r) \bigr)}{\log r}, \\
  \label{eq:lconicaldim}
  \liminf_{r \downarrow 0} \frac{\log \mu\bigl( B(x,r) \cap C(x) \bigr)}{\log r} &= \liminf_{r \downarrow 0} \frac{\log \mu\bigl( B(x,r) \bigr)}{\log r},
\end{align}
for $\mu$-almost all points $x \in \R^n$? Here the right-hand sides of \eqref{eq:uconicaldim} and \eqref{eq:lconicaldim} are denoted by $\udimloc(\mu,x)$ and $\ldimloc(\mu,x)$, and they are the \emph{upper and lower local dimensions of the measure $\mu$ at $x \in \R^d$}, respectively. 

We prove that if $C$ is a cone with opening angle at least $\pi$, then  \eqref{eq:uconicaldim} and \eqref{eq:lconicaldim} hold for all measures $\mu$ and for $\mu$-almost all $x\in\R^n$. Moreover, \eqref{eq:lconicaldim} holds also for cones with small opening angle at $\mu$-almost all points where
$\ldimloc(\mu,x)$ is large. The analogous result for $\udimloc(\mu,x)$ fails. Finally, we prove that  \eqref{eq:uconicaldim} and  \eqref{eq:lconicaldim} are true for any purely unrectifiable self-similar measure on a self-similar set satisfying the open set condition. Most of the results are obtained as corollaries to more general theorems describing the speed of decay of
\begin{equation}\label{cdens}
  \frac{\mu\bigl( B(x,r) \cap C(x) \bigr)}{\mu\bigl( B(x,r) \bigr)}
\end{equation}
as $r\downarrow 0$.
In geometric measure theory, it has been of great interest to determine when the (upper and lower) limits of \eqref{cdens} are zero (resp. positive) at $\mu$-almost all points. The results obtained so far have connections and applications to rectifiability and porosity problems, see e.g. \cite{Besicovitch1938,
Marstrand1954, Federer1969, Falconer1985, Mattila1995, MattilaParamonov1995}
for some classical results. For more recent results and references, see
\cite{MeraMoran2001, Lorent2003, MeraMoranPreissZajicek2003, Suomala2005a, Suomala2008, OrponenSahlsten2012} for lower densities and connections to upper porosity and
\cite{KaenmakiSuomala2004, KaenmakiSuomala2008, CsornyeiKaenmakiRajalaSuomala2010, Kaenmaki2010, KaenmakiRajalaSuomala2010a, SahlstenShmerkinSuomala2011} for upper conical density results. We remark that if $\ldimloc(\mu,x)=\udimloc(\mu,x)$ for $\mu$-almost all points,
then \eqref{eq:lconicaldim} follows from the previously known upper conical density estimates. See e.g.\ \cite{CsornyeiKaenmakiRajalaSuomala2010, KaenmakiRajalaSuomala2010a}.

We finish this introduction by fixing some notation. We let $B(x,r)$ denote the closed ball centred at $x\in\R^n$ with radius $r>0$.
Let $n \in \N$, $m \in \{ 0,\ldots,n-1 \}$, and $G(n,n-m)$ be the space of all $(n-m)$-dimensional linear subspaces of $\R^n$. The unit sphere of $\R^n$ is denoted by $S^{n-1}$. For $x \in \R^n$, $\theta \in S^{n-1}$, $V \in G(n,n-m)$, and $0 \le \alpha \le 1$, we set
\begin{align*}
  H(x,\theta,\alpha) &= \{ y \in \R^n : (y-x) \cdot \theta > \alpha|y-x| \}, \\
  X(x,V,\alpha) &= \{ y \in \R^n : \dist(y-x,V) < \alpha|y-x| \}.
\end{align*}
If $\alpha$ is small, then the cone $X(x,V,\alpha)$ is a narrow cone around the translated plane $V$
whereas $H(x,\theta,\alpha)$ is almost a half-space. We write $H(x,\theta)$ for the open half-space $H(x,\theta,0)$.

We will exclusively work with nontrivial Borel regular (outer) measures defined on all subsets of $\R^n$ so that bounded sets have finite measure. For simplicity, we call them just \emph{measures}.
The support of a measure $\mu$, denoted by $\spt(\mu)$, is the smallest closed subset of $\R^n$ with full $\mu$-measure.

Self-similar sets will be referred frequently. The following notation is used in connection to such sets. Let $\kappa \ge 2$ and assume that for each $i \in \{ 1,\ldots,\kappa \}$ there is a mapping $f_i \colon \R^n \to \R^n$ and a constant $0<r_i<1$ so that $|f_i(x)-f_i(y)| = r_i|x-y|$ for all $x,y \in \R^n$. The unique nonempty compact set $E$ satisfying $E = \bigcup_{i=1}^\kappa f_i(E)$ is the \emph{self-similar set}. An \emph{open set condition} is satisfied if there exists a nonempty open set $V$ so that $\bigcup_{i=1}^\kappa f_i(V) \subset V$ with pairwise disjoint union.

Let $\Sigma = \{ 1,\ldots,\kappa \}^\N$, $\Sigma_n = \{ 1,\ldots,\kappa \}^n$, and $\Sigma_* = \{ \varnothing \} \cup \bigcup_{n \in \N} \Sigma_n$. Denote by $|\iii|$ the length of a word $\iii \in \Sigma_* \cup \Sigma$ and if $|\iii| \ge n$, we let $\iii|_n = i_1 \cdots i_n$. Let $\pi$ be the natural projection $\pi\colon\Sigma\to E$ defined by the relation $\{ \pi(\iii) \} = \bigcap_{n \in \N} f_{\iii|_n}(E)$. We also denote by $E_\iii = f_\iii(E) = \pi([\iii])$ the projection of the cylinder set $[\iii] = \{ \iii\jjj : \jjj \in \Sigma \}$ for all $\iii\in\Sigma_*$. Here $f_\iii = f_{i_1} \circ \cdots \circ f_{i_n}$ for all $\iii = i_1\cdots i_n \in \Sigma_n$.

The measure $\nu$ on $\Sigma$ obtained from a probability vector $(p_1,\ldots,p_\kappa)$ by setting $\nu([\iii]) = p_\iii = p_{i_1} \cdots p_{i_n}$ for all $\iii \in \Sigma_n$ is called \emph{Bernoulli measure} and its projection $\mu = \pi\nu$ on $E$ is the \emph{self-similar measure}. If $t \ge 0$ is such that $\sum_{i=1}^\kappa r_i^t = 1$, then the self-similar measure obtained from $(r_1^t,\ldots,r_\kappa^t)$ is called \emph{natural measure}. It is well known that if the open set condition is satisfied, then the natural measure is comparable to $\HH^t|_E$, where $\HH^t$ is the $t$-dimensional Hausdorff measure.

\section{Dimension of general measures on large cones}

The main result in this section is the following theorem.

\begin{theorem} \label{thm:lower_density}
  Let $f \colon (0,1) \to \R$ be an increasing function such that
  \begin{equation} \label{eq:int}
    \int_0^1 \frac{f(t)}{t} \,dt < \infty
  \end{equation}
  and let $\mu$ be a measure on $\R^n$.

  (1) If $\theta \in S^{n-1}$, then
  \begin{equation*}
    \liminf_{r \downarrow 0} \frac{\mu\bigl( B(x,r) \setminus H(x,\theta) \bigr)}{f(r) \mu\bigl( B(x,r) \bigr)} \ge 1
  \end{equation*}
  for $\mu$-almost all $x \in \R^n$.

  (2) If $0 < \alpha \le 1$, then
  \begin{equation*}
    \liminf_{r \downarrow 0} \inf_{\theta \in S^{n-1}} \frac{\mu\bigl( B(x,r) \setminus H(x,\theta,\alpha) \bigr)}{f(r) \mu\bigl( B(x,r) \bigr)} \ge 1
  \end{equation*}
  for $\mu$-almost all $x \in \R^n$.
\end{theorem}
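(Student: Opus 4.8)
The plan is to prove part~(1) (fixed $\theta$) as the core and then deduce part~(2) from it by a compactness argument. For part~(1), fix $\theta\in S^{n-1}$ and a rational $s\in(0,1)$. Since $f$ is increasing and \eqref{eq:int} forces $f(r)\to0$ as $r\downarrow0$, it suffices to show that
\[
  A_s=\Bigl\{x\in\R^n:\liminf_{r\downarrow0}\tfrac{\mu(B(x,r)\setminus H(x,\theta))}{f(r)\mu(B(x,r))}<s\Bigr\}
\]
is $\mu$-null, since intersecting over rational $s<1$ then gives the claim. Working locally I may assume $\mu$ is finite. For $x\in A_s$ there are arbitrarily small $r$ with $\mu(B(x,r)\setminus H(x,\theta))<sf(r)\mu(B(x,r))$; because $r\mapsto\mu(B(x,r)\setminus H(x,\theta))$, $r\mapsto\mu(B(x,r))$ and $f$ are all increasing, each such $r\in(2^{-k-1},2^{-k}]$ forces $x\in\tilde B_k:=\{x:\mu(B(x,2^{-k-1})\setminus H(x,\theta))<sf(2^{-k})\mu(B(x,2^{-k}))\}$. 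Hence $A_s\subset\limsup_k\tilde B_k$, and by Borel--Cantelli it is enough to prove $\sum_k\mu(\tilde B_k)<\infty$.

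The one identity driving everything is the symmetry of the pair $(x,y)\mapsto(y,x)$. Writing the backward half-ball condition as $(y-x)\cdot\theta\le0$ and using Fubini, for every $r>0$ one gets
\[
  \int\mu(B(x,r)\setminus H(x,\theta))\,d\mu(x)=\tfrac12\int\mu(B(x,r))\,d\mu(x)+\tfrac12\,W_r,\qquad W_r\ge0,
\]
because for $\mu\times\mu$-a.e.\ pair with $|x-y|\le r$ exactly one of $(y-x)\cdot\theta$, $(x-y)\cdot\theta$ is negative (the residual term $W_r$ collecting the mass on the bounding hyperplane). Thus $\int\mu(B(x,r)\setminus H(x,\theta))\,d\mu\ge\tfrac12\int\mu(B(x,r))\,d\mu$ at \emph{every} scale: on $\mu$-average a half-space captures at most half the mass, which is precisely the phenomenon behind \eqref{eq:uconicaldim}--\eqref{eq:lconicaldim} for cones of opening at least $\pi$. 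The hypothesis \eqref{eq:int} enters only through the elementary equivalence $\int_0^1 f(t)/t\,dt<\infty\iff\sum_k f(2^{-k})<\infty$.

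The step I expect to be the main obstacle is upgrading this averaged statement to $\sum_k\mu(\tilde B_k)<\infty$. A naive per-scale bound $\mu(\tilde B_k)\le Cf(2^{-k})$ is \emph{false}: a measure whose density grows like $e^{Mt}$ in the direction $\theta$ has backward fraction $\approx e^{-Mr}$ throughout, so at the single scale $r\sim M^{-1}\log(1/f(r))$ essentially all of its mass lies in $\tilde B_k$ while $f(2^{-k})$ is tiny. The argument must therefore be genuinely cross-scale: such forward concentration cannot persist through all small scales, since halving $r$ makes the annulus $B(x,2^{-k})\setminus B(x,2^{-k-1})$ contribute balanced, hence backward, mass, so for a fixed $\mu$ only $\mu$-integrably many scales are bad at a typical point. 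I would try to realize this by combining the monotonicity of $r\mapsto\mu(B(x,r)\setminus H(x,\theta))$ with the symmetry identity and a Besicovitch covering that, at each scale, charges the bad balls to the backward mass they force at the next scale; bounded overlap together with $\sum_k f(2^{-k})<\infty$ should then give the summability. This cross-scale bookkeeping is the crux.

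Finally, part~(2) reduces to part~(1). Given $0<\alpha\le1$, choose by compactness a finite set $\{\theta_1,\dots,\theta_N\}\subset S^{n-1}$ such that every $\theta\in S^{n-1}$ lies within angle $<\arcsin\alpha$ of some $\theta_j$. If $\theta$ is that close to $\theta_j$, then $(y-x)\cdot\theta>\alpha|y-x|$ (angle to $\theta$ less than $\arccos\alpha$) forces angle to $\theta_j$ less than $\arccos\alpha+\arcsin\alpha=\pi/2$, i.e.\ $(y-x)\cdot\theta_j>0$; hence $H(x,\theta,\alpha)\subset H(x,\theta_j)$ and $\mu(B(x,r)\setminus H(x,\theta,\alpha))\ge\mu(B(x,r)\setminus H(x,\theta_j))$. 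Taking the infimum over $\theta$,
\[
  \inf_{\theta\in S^{n-1}}\frac{\mu(B(x,r)\setminus H(x,\theta,\alpha))}{f(r)\mu(B(x,r))}\ge\min_{1\le j\le N}\frac{\mu(B(x,r)\setminus H(x,\theta_j))}{f(r)\mu(B(x,r))}.
\]
Because the minimum runs over finitely many indices, its $\liminf$ as $r\downarrow0$ is at least the minimum of the individual liminfs, each of which is $\ge1$ for $\mu$-a.e.\ $x$ by part~(1); intersecting the $N$ full-measure sets completes the proof.
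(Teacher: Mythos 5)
Your reduction to a Borel--Cantelli statement over dyadic scales is exactly the paper's strategy, and your deduction of part (2) from part (1) via a finite $\arcsin\alpha$-net of directions $\theta_j$ with $H(x,\theta,\alpha)\subset H(x,\theta_j)$ is correct (the paper builds the same pigeonholing into its covering lemma instead). But the proof is not complete: the entire content of the theorem is the per-scale estimate $\mu(\tilde B_k)\le C f(2^{-k})$, and at precisely that point you stop proving and start planning (``I would try to realize this by\dots this cross-scale bookkeeping is the crux''). The symmetry/Fubini identity you do establish is correct but, as you yourself note, it is only an average weighted by $\mu(B(x,r))$, and no Chebyshev-type argument converts it into the needed bound on $\mu(\tilde B_k)$. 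So the key step is missing.

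Moreover, the reason you give for abandoning the direct per-scale bound is itself mistaken, and this is worth flagging because it sends you down the wrong road. For a finite measure (to which one reduces by restricting to balls), the bound $\mu(\tilde B_k)\le c(n)^{-1}\mu(\R^n)f(2^{-k})$ is \emph{true}, and it is what the paper proves. Your exponential counterexample fails because you evaluate the backward fraction at a spatially typical point rather than a $\mu$-typical one: with density $\propto e^{Mt}$ on a bounded set, almost all of the mass sits within distance $O(M^{-1}\log(1/f))$ of the top of the support in the direction $\theta$, and at those points the backward half-ball captures a definite fraction (indeed almost all) of $\mu(B(x,r))$; the set where the backward fraction is below $f(r)$ carries mass only $\approx f(r)$. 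The mechanism that makes this work in general is the paper's covering lemma: cover the bad set $A_i$ at scale $R$ by boundedly overlapping balls $B(x,R/2)$ with centres $5R$-separated, and \emph{inside each such ball select the point $y_x$ of $A_i$ that is extremal in the direction $\theta$}. Then $B(x,R/2)\setminus H(y_x,\theta)$ contains essentially all of $A_i\cap B(x,R/2)$, so the backward half-balls of the selected (pairwise disjoint) balls $B(y_x,r_{y_x})$ together carry at least $c(n)\mu(A_i)$; since each $y_x$ is itself a bad point, each such backward half-ball has measure at most $f(2^{-i})\mu(B(y_x,r_{y_x}))$, and disjointness gives $c(n)\mu(A_i)\le f(2^{-i})\mu(\R^n)$. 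This extremal-point selection is the single idea your argument lacks, and with it no cross-scale bookkeeping is needed.
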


As a corollary, we obtain the local dimension formula for large cones that was mentioned in the introduction.

\begin{corollary} \label{thm:large_angle}
  Suppose $\mu$ is a measure on $\R^n$. If $\theta \in S^{n-1}$, then
  \begin{align*}
    \udimloc(\mu,x) &= \limsup_{r \downarrow 0} \frac{\log \mu\bigl( B(x,r) \setminus H(x,\theta) \bigr)}{\log r}, \\
    \ldimloc(\mu,x) &= \liminf_{r \downarrow 0} \frac{\log \mu\bigl( B(x,r) \setminus H(x,\theta) \bigr)}{\log r}
  \end{align*}
  for $\mu$-almost all $x \in \R^n$. Moreover, if $0 < \alpha \le 1$, then
  \begin{align*}
    \udimloc(\mu,x) &= \limsup_{r \downarrow 0} \sup_{\theta \in S^{n-1}} \frac{\log  \mu\bigl( B(x,r) \setminus H(x,\theta,\alpha) \bigr)}{\log r}, \\
    \ldimloc(\mu,x) &= \liminf_{r \downarrow 0} \sup_{\theta \in S^{n-1}} \frac{\log  \mu\bigl( B(x,r) \setminus H(x,\theta,\alpha) \bigr)}{\log r}
  \end{align*}
  for $\mu$-almost all $x \in \R^n$.
\end{corollary}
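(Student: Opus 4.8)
The plan is to deduce the corollary from Theorem~\ref{thm:lower_density} by combining an elementary monotonicity inequality with a careful choice of the gauge function $f$. Observe first that $B(x,r)\setminus H(x,\theta)$ is precisely the intersection of $B(x,r)$ with the closed half-space $\{y:(y-x)\cdot\theta\le 0\}$, i.e.\ a cone of opening angle $\pi$, and that $B(x,r)\setminus H(x,\theta,\alpha)$ is a cone of opening angle at least $\pi$. Since each of these sets is contained in $B(x,r)$ we always have $\mu(B(x,r)\setminus H(x,\theta))\le\mu(B(x,r))$, and as $\log r<0$ for small $r$ this gives
\begin{equation*}
  \frac{\log\mu\bigl(B(x,r)\setminus H(x,\theta)\bigr)}{\log r}\ge\frac{\log\mu\bigl(B(x,r)\bigr)}{\log r}.
\end{equation*}
Passing to $\limsup$ and $\liminf$ yields the inequalities ``$\ge$'' in both formulas of part~(1), and the analogous estimate with $\sup_\theta$ on the left gives ``$\ge$'' in part~(2). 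So the entire content lies in the reverse inequalities.

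For the reverse inequalities I would apply Theorem~\ref{thm:lower_density} with a gauge function that tends to $0$ more slowly than any power of $t$, for instance $f(t)=1/\bigl(1+(\log(1/t))^2\bigr)$. This $f$ is increasing, and the substitution $u=\log(1/t)$ shows $\int_0^1 f(t)/t\,dt=\int_0^\infty(1+u^2)^{-1}\,du<\infty$, so \eqref{eq:int} holds; moreover $\log f(r)$ grows only like $\log\log(1/r)$, whence
\begin{equation*}
  \lim_{r\downarrow 0}\frac{\log f(r)}{\log r}=0.
\end{equation*}
Fixing such an $f$, Theorem~\ref{thm:lower_density}(1) gives, for $\mu$-almost every $x$, some $r_x>0$ with $\mu(B(x,r)\setminus H(x,\theta))\ge\tfrac12 f(r)\mu(B(x,r))$ for all $0<r<r_x$. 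Taking logarithms, dividing by $\log r<0$, and using that $\bigl(\log\tfrac12+\log f(r)\bigr)/\log r\to 0$, I obtain
\begin{equation*}
  \frac{\log\mu\bigl(B(x,r)\setminus H(x,\theta)\bigr)}{\log r}\le\frac{\log\mu\bigl(B(x,r)\bigr)}{\log r}+o(1)
\end{equation*}
as $r\downarrow 0$. Taking $\limsup$ and $\liminf$ then yields ``$\le$'' and completes part~(1).

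For part~(2) the one extra point is to commute the supremum over $\theta$ with division by the negative quantity $\log r$: for fixed small $r$ one has $\sup_\theta\frac{\log\mu(B(x,r)\setminus H(x,\theta,\alpha))}{\log r}=\frac{1}{\log r}\log\inf_\theta\mu(B(x,r)\setminus H(x,\theta,\alpha))$, so the relevant quantity is governed by the \emph{infimum over $\theta$} of the conical masses. This is exactly what Theorem~\ref{thm:lower_density}(2) supplies: its uniform-in-$\theta$ lower bound gives $\inf_\theta\mu(B(x,r)\setminus H(x,\theta,\alpha))\ge\tfrac12 f(r)\mu(B(x,r))$ for all small $r$ at $\mu$-almost every $x$. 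Feeding this into the same logarithm-and-divide computation as above produces ``$\le$'', and together with the monotonicity bound of the first paragraph this finishes the proof. The only step demanding care is this interchange of $\sup_\theta$ with $\log r<0$, which is precisely what makes the uniform formulation of Theorem~\ref{thm:lower_density}(2), rather than a bound for each fixed $\theta$, indispensable; the selection of $f$ and the verification of \eqref{eq:int} are then routine.
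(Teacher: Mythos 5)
Your proof is correct and follows exactly the route the paper intends: the paper states the corollary as an immediate consequence of Theorem~\ref{thm:lower_density} applied with a gauge $f$ satisfying \eqref{eq:int} and $\log f(r)/\log r\to 0$ (cf.\ Remark~\ref{rem:lower_density}(1), where $f(t)=|\log t|^{-s}$, $s>1$, is noted to work), combined with the trivial monotonicity inequality. Your handling of the supremum over $\theta$ via the uniform bound of Theorem~\ref{thm:lower_density}(2) is the right observation and matches the paper's formulation.
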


\begin{remark} \label{rem:lower_density}
  (1) If $s>1$, then the function $f(t)=|\log t|^{-s}$ satisfies the condition \eqref{eq:int}. Observe also that the condition \eqref{eq:int} is equivalent to $\sum_{i=1}^\infty f(2^{-i}) < \infty$.

  (2) The condition \eqref{eq:int} is sharp in the sense that if $\int_0^1 t^{-1}f(t) dt=\infty$, then there is a measure $\mu$ on $\R$ such that
  \[
    \liminf_{r\downarrow 0}\frac{\mu([x,x+r])}{f(r)\mu([x-r,x+r])}=0
  \]
  for $\mu$-almost all $x\in\R$. This is proved in Proposition \ref{ex:sharp}.

  (3) If $\alpha(t)>0$ is an increasing function with $\lim_{t\downarrow 0}\alpha(t)=0$, then, without any changes in the proof, Theorem \ref{thm:lower_density}(2) can be strengthened to
  \[
    \liminf_{r \downarrow 0} \inf_{\theta \in S^{n-1}} \frac{\mu\bigl(B(x,r) \setminus H(x,\theta,\alpha(r)) \bigr)}{f(r)\mu\bigl( B(x,r) \bigr)}
  \]
  provided that $f$ satisfies $\int_{0}^1 t^{-1}c(n,\alpha(t))^{-1} f(t) dt<\infty$ and $c(n,\alpha(t))$ is as in Lemma \ref{thm:covering}(2).

  (4) Compactness of $S^{n-1}$ implies that Theorem \ref{thm:lower_density}(2) is equivalent to the claim according to which for $\mu$-almost all $x \in \R^n$ we have
  \begin{equation*}
    \liminf_{r \downarrow 0} \frac{\mu\bigl( B(x,r) \setminus H(x,\theta,\alpha) \bigr)}{f(r) \mu\bigl( B(x,r) \bigr)} \ge 1
  \end{equation*}
  simultaneously for all $\theta \in S^{n-1}$.

  (5) It is clear that in Theorem \ref{thm:lower_density}(2) the cones $H(x,\theta,\alpha)$ cannot be replaced by $H(x,\theta)$. For example, consider the length measure on a circle.

  (6) In the view of Corollary \ref{thm:large_angle}, local dimensions of $\mu$ on $\R$ can be calculated via one-sided balls for $\mu$-almost all points. Falconer \cite{Falconer2004} has shown that for the natural measure $\mu$ on the $\tfrac13$-Cantor set $C\subset\R$, the exceptional set $\{ x \in \R : \udimloc(\mu,x) < \limsup_{r \downarrow 0} \log\mu([x,x+r])/\log r \}$ can have as large Hausdorff dimension as $C$. However, any self-similar measure $\mu$ on $C$ satisfies $\ldimloc(\mu,x) = \liminf_{r \downarrow 0} \log\mu([x,x+r])/\log r$ except for at most countably many points; see Proposition \ref{thm:falconer}. For the natural measure, this is shown in \cite{Falconer2004}.
\end{remark}

Before proving Theorem \ref{thm:lower_density}, we exhibit a covering lemma suitable for our purposes. Its proof is based on simple geometric inspections (cf.\ \cite[Theorem 3.1]{CsornyeiKaenmakiRajalaSuomala2010}).

\begin{lemma} \label{thm:covering}
  Suppose that $\mu$ is a measure on $\R^n$, $A \subset \R^n$ is a bounded 
  set, $0<R<\infty$, and $R \le r_x \le 2R$ for all $x \in A$.

  (1) If $\theta \in S^{n-1}$, then there exists a finite set $F \subset A$ so that the collection $\{ B(x,r_x) \}_{x \in F}$ is pairwise disjoint and
  \begin{equation*}
    \sum_{x \in F} \mu \bigl( B(x,r_x) \setminus H(x,\theta) \bigr) \ge c\mu(A),
  \end{equation*}
  where $c=c(n)>0$ is a constant that depends only on $n$.

  (2) If $\theta_x \in S^{n-1}$ for all $x \in A$ and $0 < \alpha \le 1$, then there exists a finite set $F \subset A$ so that the collection $\{ B(x,r_x) \}_{x \in F}$ is pairwise disjoint and
  \begin{equation*}
    \sum_{x \in F} \mu \bigl( B(x,r_x) \setminus H(x,\theta_x,\alpha) \bigr) \ge c\mu(A),
  \end{equation*}
  where $c=c(n,\alpha)>0$ is a constant that depends only on $n$ and $\alpha$.
\end{lemma}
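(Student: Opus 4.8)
The plan is to prove a Vitali-type covering statement by a direct geometric argument that selects disjoint balls whose complementary caps capture a fixed proportion of $\mu(A)$. The key geometric observation is that for a fixed direction $\theta$ (resp.\ varying directions $\theta_x$), if two balls $B(x,r_x)$ and $B(y,r_y)$ of comparable radii intersect, then the center of one ball must lie in the complementary cap $B(\cdot,\cdot)\setminus H(\cdot,\theta)$ of the other. Concretely, since $R\le r_x\le 2R$ for all $x$, overlapping balls have centers within distance $4R$ of each other, and the half-space $H(x,\theta)$ is bounded by a hyperplane through $x$ with normal $\theta$; hence of the two centers $x,y$, at least one lies on the closed side away from $\theta$ relative to the other, so that $y\in B(x,r_x)\setminus H(x,\theta)$ or $x\in B(y,r_y)\setminus H(y,\theta)$ (after adjusting radii by the factor $2$ coming from $R\le r_x\le 2R$). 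I would make this precise so that each selected cap absorbs a definite amount of mass from the nearby unselected centers.

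First I would apply the $5r$-covering lemma (the basic Vitali covering lemma) to the collection $\{B(x,r_x)\}_{x\in A}$ to extract a countable pairwise disjoint subcollection $\{B(x,r_x)\}_{x\in F'}$ such that $A\subset\bigcup_{x\in F'}B(x,5r_x)$; since radii lie in $[R,2R]$ and $A$ is bounded, $F'$ is finite. Then $\mu(A)\le\sum_{x\in F'}\mu(B(x,5r_x))$. The core of the argument is to compare $\mu(B(x,5r_x))$ with $\mu(B(x,r_x)\setminus H(x,\theta))$: I would cover the annular/ball region $B(x,5r_x)$ by a bounded number $N=N(n)$ of caps of the form $B(z_j,\rho_j)\setminus H(z_j,\theta)$, where each such cap is a translate or rescaling fitting inside the complement half-cones, exploiting that the complement $\R^n\setminus H(x,\theta)$ is itself a half-space of opening angle $\pi$ and that finitely many rotated copies of a fixed spherical cap cover the sphere. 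A counting/pigeonhole step then yields $\sum_{x\in F'}\mu(B(x,r_x)\setminus H(x,\theta))\ge c(n)\mu(A)$ after passing to a suitable disjoint subfamily $F\subset F'$.

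For part (2), with small opening angle $\alpha$ and direction $\theta_x$ possibly depending on $x$, the same scheme applies but the number of caps needed to cover a ball by complements of the narrower half-spaces $H(z,\theta,\alpha)$ grows as $\alpha\downarrow 0$; this is the source of the $\alpha$-dependence in the constant $c(n,\alpha)$. The complement $B(z,\rho)\setminus H(z,\theta,\alpha)$ is a slightly more than half of the ball, so covering $B(x,5r_x)$ by such complements with controlled multiplicity requires $N(n,\alpha)$ pieces, and $c(n,\alpha)=N(n,\alpha)^{-1}$ up to constants. Allowing $\theta_x$ to vary causes no extra difficulty once the covering is done pointwise at each chosen center, since the disjointness of the selected balls is a purely metric condition independent of the directions.

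The main obstacle I anticipate is making the geometric covering of a ball by complementary caps both explicit and uniform: one must verify that $B(x,5r_x)$ can be covered by a controlled number of sets each of which, after the selection, is contained in some $B(y,r_y)\setminus H(y,\theta)$ with $y\in F$ and the $B(y,r_y)$ pairwise disjoint. This forces a careful bookkeeping linking the $5r$-covering output to the disjoint family $F$ and to the cap geometry; getting the constant to depend only on $n$ (part (1)) or on $n$ and $\alpha$ (part (2)), and not on $\mu$, $A$, or $R$, is where the scale-invariance $R\le r_x\le 2R$ must be used decisively. I expect the remaining estimates — bounding $\mu(B(x,5r_x))$ by a fixed multiple of a single cap's measure via the pigeonhole principle — to be routine once the covering geometry is pinned down.
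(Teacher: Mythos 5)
There is a genuine gap at the heart of your argument, in the step where you ``cover $B(x,5r_x)$ by a bounded number $N(n)$ of caps of the form $B(z_j,\rho_j)\setminus H(z_j,\theta)$.'' Three constraints make this unworkable as stated. First, the conclusion requires $F\subset A$, so the cap centers $z_j$ must be points of $A$ and the radii must be the prescribed $r_{z_j}\le 2R$, which is smaller than $5r_x$; you cannot place caps freely. Second, and more seriously, in part (1) the direction $\theta$ is \emph{fixed}, so every cap $B(z,\rho)\setminus H(z,\theta)$ is the intersection of a ball with the complement of a half-space whose boundary hyperplane has the \emph{same} normal $\theta$. Finitely many such caps with centers $z_1,\dots,z_N$ necessarily omit the set $\bigcap_j H(z_j,\theta)$ of points lying strictly ``above'' all the chosen centers in the direction $\theta$. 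If the supremum of $y\cdot\theta$ over $A$ (locally) is not attained and $\mu$ concentrates near that supremal level --- already possible for $n=1$, where the caps are intervals $[z_j-r_{z_j},z_j]$ --- then no finite choice of centers from $A$ covers a full-measure portion of $A$, so the pigeonhole step has nothing to act on. Your remark about finitely many rotated spherical caps covering the sphere does not help here, because rotating $\theta$ is not permitted in part (1). Third, the dichotomy ``$y$ lies in the cap of $x$ or $x$ lies in the cap of $y$'' only controls where \emph{centers} sit, and for a fixed selected $x$ roughly half of the nearby points of $A$ can lie in $H(x,\theta)$ and thus escape $x$'s cap entirely.

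The missing idea is an \emph{extremal-point selection}, which is what the paper's proof does: decompose $A$ into pieces of diameter $\lesssim R$ (a maximal $R/4$-packing, followed by a bounded-overlap coloring so that the finally selected balls, which have radii between $R$ and $2R$ and centers $5R$-separated, are pairwise disjoint), and in each piece $A\cap B(x,R/2)$ choose $y_x\in A$ with $y_x\cdot\theta$ so close to $t=\sup\{y\cdot\theta: y\in A\cap B(x,R/2)\}$ that $\mu\bigl(A\cap B(x,R/2)\setminus H(y_x,\theta)\bigr)\ge\tfrac12\mu\bigl(A\cap B(x,R/2)\bigr)$. Since $H(y_x,\theta)$ is open, such a $y_x$ exists by a limiting argument even when the supremum is not attained; because $r_{y_x}\ge R$ and $y_x\in B(x,R/2)$, the single cap $B(y_x,r_{y_x})\setminus H(y_x,\theta)$ then swallows half the mass of the whole piece, and summing over the pieces of one color class gives $c(n)=1/(2C(n))$. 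Your reduction of part (2) to part (1) by pigeonholing the directions $\theta_x$ over a finite $\varrho$-net of $S^{n-1}$, using $H(x,\theta,\alpha)\subset H(x,\zeta)$ for $\theta$ near $\zeta$, is correct and is exactly how the paper proceeds; that part of your plan stands once part (1) is repaired.
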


\begin{proof}
  (1) Let $\{ B(x,R/4) \}_{x \in F_0}$ be a maximal packing of $A$. Thus $F_0 \subset A$ is a finite set and $A \subset \bigcup_{x \in F_0} B(x,R/2)$. A simple volume argument implies that there exists a positive constant $C=C(n)$ so that  for each $y\in \R^n$, the ball $B(y, 5R)$ intersects at most $C$ balls  $B(x,5R)$ with $x\in F_0$. This in turn implies that we may decompose $F_0$ into $C$ subsets such that the points in each subset have mutual distance at least $5R$. For some $F_1$ in this decomposition, it then follows that
  \begin{equation*}
    \mu\biggl(A\cap \Big( \bigcup_{x \in F_1} B(x,R/2)\Big) \biggr) \ge \mu(A)/C
  \end{equation*}
  and $|x-y| \ge 5R$ for all $x,y \in F_1$ with $x \neq y$.

  For $x \in F_1$, let $t = \sup\{ y \cdot \theta : y\in A\cap B(x,R/2) \}$. If $y_n\in A\cap B(x,R/2)$ so that $y_n\cdot\theta\rightarrow t$ as $n \to \infty$, then it follows that $\mu\bigl( A\cap B(x,R/2) \setminus H(y_n,\theta) \bigr) \rightarrow \mu\bigl( A\cap B(x,R/2) \bigr)$. Recall that $H(y_n,\theta)$ is an open half-space. In particular, this implies that we can pick $y_x \in A \cap B(x,R/2)$ for which
  \[
    \mu\bigl( B(x,R/2) \setminus H(y_x,\theta) \bigr) \ge \tfrac12 \mu\bigl(A \cap B(x,R/2) \bigr).
  \]
  Let $F = \{ y_x : x \in F_1 \}$. Since the collection $\{ B(y_x,r_{y_x}) \}_{x \in F_1}$ is pairwise disjoint, we arrive at
  \begin{align*}
    \sum_{x \in F} \mu \bigl( B(x,r_x) \setminus H(x,\theta) \bigr) &\ge \sum_{x \in F_1} \mu \bigl( B(x,R/2) \setminus H(y_x,\theta) \bigr) \\
    &\ge \frac12 \sum_{x \in F_1} \mu\bigl(A \cap B(x,R/2) \bigr)\\
   &= \frac12 \mu\biggl(A\cap \Big( \bigcup_{x \in F_1} B(x,R/2)\Big) \biggr) \ge \mu(A)/(2C)
  \end{align*}
  finishing the proof of (1).

  (2) Choose $\roo=\roo(\alpha)>0$ so that $H(0,\theta,\alpha) \subset H(0,\zeta)$ for all $\zeta \in S^{n-1}$ and $\theta \in S^{n-1} \cap B(\zeta,\roo)$. Since $S^{n-1}$ is compact, we find $M=M(n,\roo) \in \N$ and $\zeta_1,\ldots,\zeta_M \in S^{n-1}$ such that $S^{n-1} \subset \bigcup_{j=1}^M B(\zeta_j,\roo)$. Thus there is $j_0 \in \{ 1,\ldots,M \}$ so that $\mu(A') \ge \mu(A)/M$, where $A' = \{ x \in A : \theta_x \in B(\zeta_{j_0}, \roo) \}$.
  Observe that
  \[
    B(x,r_x) \setminus H(x,\theta_x,\alpha) \supset B(x,r_x) \setminus H(x,\zeta_{j_0})
  \]
  for all $x \in A'$. Applying now (1) to the set $A'$ and $\zeta_{j_0} \in S^{n-1}$ yields the claim.
\end{proof}

\begin{proof}[Proof of Theorem \ref{thm:lower_density}]
Without loss of generality we may assume that $\mu$ has bounded support. Let $\theta\in S^{n-1}$ and define
\begin{equation*}
  A_i = \bigl\{ x \in \R^n : \frac{\mu\bigl( B(x,r_x) \setminus H(x,\theta) \bigr)}{f\bigl( r_x \bigr) \mu\bigl( B(x,r_x) \bigr)} < 1 \text{ for some }2^{-i-1}\le r_x<2^{-i} \bigr\}
\end{equation*}
for all $i \in \N$.
Applying Lemma \ref{thm:covering}(1), we find finite sets $F_i\subset A_i$ such that
$\{B(x,r_x)\}_{x\in F_i}$ are pairwise disjoint and $\sum_{x\in F_i} \mu\bigl( B(x,r_x) \setminus H(x,\theta) \bigr) \ge c\mu(A_i)$, where $c=c(n)>0$ is the constant from Lemma \ref{thm:covering}(1). Together with the definition of $A_i$ this implies
\[
  \mu(A_i) \le c^{-1}\sum_{x\in F_i} \mu\bigl( B(x,r_x) \setminus H(x,\theta) \bigr) \le c^{-1}\mu(\R^n) f(2^{-i}).
\]
Since $\sum_{i=1}^\infty f(2^{-i}) < \infty$, we have $\sum_{i=1}^\infty \mu(A_i)<\infty$. The first claim is now proved since $\mu$-almost all $x\in\R^n$ belong to only finitely many sets $A_i$ by the Borel-Cantelli lemma.

The second claim is proved in the same way by considering
\begin{align*}
  A_i = \{ x \in \R^n : \;&\mu\bigl( B(x,r_x) \setminus H(x,\theta,\alpha) \bigr) < f\bigl( r_x \bigr) \mu\bigl( B(x,r_x) \bigr) \\
  &\text{for some } 2^{-i-1} \le r_x < 2^{-i} \text{ and } \theta \in S^{n-1}\},
\end{align*}
and using Lemma \ref{thm:covering}(2).
\end{proof}

The following example verifies the sharpness of the integrability condition in Theorem \ref{thm:lower_density}.

\begin{proposition} \label{ex:sharp}
  Let $C\subset[0,1]$ be the $\tfrac13$-Cantor set and $\mu$ its natural measure. If $f \colon (0,1) \to \R$ is an increasing function such that
  \begin{equation} \label{eq:fassumption}
    \int_0^1 \frac{f(t)}{t} \,dt = \infty,
  \end{equation}
  then
  \begin{equation} \label{eq:claimA}
    \liminf_{r \downarrow 0} \frac{\mu([x,x+r])}{f(r)\mu([x-r,x+r])} = 0
  \end{equation}
  for $\mu$-almost all $x \in C$.
\end{proposition}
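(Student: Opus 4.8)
The plan is to read off arbitrarily small scales $r$ from the ternary code of $x$ at which $\mu([x,x+r])$ is anomalously small because a gap of $C$ lies immediately to the right of $x$, while the bulk of the nearby mass sits to its left. Write each $x\in C$ (all but the countable set of gap endpoints, which is $\mu$-null) uniquely as $x=\sum_{j\ge1}d_j(x)3^{-j}$ with $d_j(x)\in\{0,2\}$; under $\mu$ the digits $d_j$ are independent and uniform on $\{0,2\}$, and the generation-$n$ cylinder $I_n(x)=[\ell_n,\ell_n+3^{-n}]$ containing $x$ has mass $2^{-n}$. First I would reduce the hypothesis to a sum: since $f$ is increasing, the same comparison as in Remark \ref{rem:lower_density}(1) shows that \eqref{eq:fassumption} is equivalent to $\sum_{n=1}^\infty f(3^{-n})=\infty$. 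Fix $\varepsilon>0$; it suffices to produce, for $\mu$-almost every $x$, infinitely many scales $r\downarrow0$ with the ratio in \eqref{eq:claimA} below $\varepsilon$, and then intersect the resulting full-measure sets over $\varepsilon=1/m$.

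Put $b_n=f(3^{-n-2})$ and $k_n=\lceil\log_2(8/(\varepsilon b_n))\rceil$, and define
\[
  E_n=\{x\in C:\ d_{n+1}(x)=0,\ d_{n+2}(x)=\dots=d_{n+k_n}(x)=2\}.
\]
On $E_n$ the point $x$ lies in the left third $L=[\ell_n,\ell_n+3^{-n-1}]$ of $I_n(x)$ and, because of the run of $2$'s, inside the rightmost generation-$(n+k_n)$ subcylinder $J\subset L$, whose right endpoint is $p=\ell_n+3^{-n-1}$; just to the right of $p$ lies the middle-third gap of $I_n(x)$, of length $3^{-n-1}$. Choosing the deterministic scale $r_n=\tfrac56 3^{-n-1}$, an elementary interval check shows that $x+r_n$ falls inside this gap while $x-r_n$ reaches into the left part of $L$. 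Hence $\mu([x,x+r_n])\le\mu(J)=2^{-n-k_n}$ and $\mu([x-r_n,x+r_n])\ge2^{-n-3}$, and by monotonicity $f(r_n)\ge b_n$, so that for $n$ large (so that $k_n\ge2$)
\[
  \frac{\mu([x,x+r_n])}{f(r_n)\,\mu([x-r_n,x+r_n])}\le\frac{8\cdot2^{-k_n}}{b_n}\le\varepsilon .
\]
Moreover $\mu(E_n)=2^{-k_n}$, and the choice of $k_n$ gives $2^{-k_n}\ge\tfrac{\varepsilon}{16}b_n$, whence $\sum_n\mu(E_n)\ge\tfrac{\varepsilon}{16}\sum_n f(3^{-n-2})=\infty$.

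The main difficulty is that the defining digit-blocks of the $E_n$ overlap once $k_n$ exceeds the spacing of consecutive indices, so the $E_n$ are not independent; and since $k_n\asymp\log_2(1/b_n)$ we only have $\mu(E_n)\asymp\varepsilon b_n$, so a naive passage to a sparse independent subsequence would in general destroy the divergence of $\sum_n\mu(E_n)$. The observation that rescues the argument is that overlapping blocks are \emph{incompatible}: if $m<n$ and the block $\{m+1,\dots,m+k_m\}$ meets $\{n+1,\dots,n+k_n\}$, then $n+1$ lies in the former, so $E_m$ forces $d_{n+1}=2$ while $E_n$ forces $d_{n+1}=0$, giving $\mu(E_m\cap E_n)=0$; for disjoint blocks $\mu(E_m\cap E_n)=\mu(E_m)\mu(E_n)$ by independence. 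In either case $\mu(E_m\cap E_n)\le\mu(E_m)\mu(E_n)$ for $m\neq n$, so the events are pairwise negatively correlated.

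Finally I would apply a divergent (Kochen--Stone) form of the Borel--Cantelli lemma. Writing $S_N=\sum_{n\le N}\mu(E_n)\to\infty$, the pairwise bound yields $\sum_{m,n\le N}\mu(E_m\cap E_n)\le S_N+S_N^2$, so
\[
  \mu\Bigl(\limsup_{n\to\infty}E_n\Bigr)\ge\limsup_{N\to\infty}\frac{S_N^2}{S_N+S_N^2}=1 .
\]
Thus for $\mu$-almost every $x$ the displayed inequality holds at the scales $r_n\to0$ for infinitely many $n$, so the $\liminf$ in \eqref{eq:claimA} is at most $\varepsilon$. Intersecting over $\varepsilon=1/m$, $m\in\N$, and using that the ratio is nonnegative gives \eqref{eq:claimA}. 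The only routine points I am glossing over are the elementary interval computations justifying the numerator and denominator bounds on $E_n$ and the disposal of the countable set of non-uniquely-coded points.
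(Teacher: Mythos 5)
Your proof is correct, and it takes a genuinely different (more self-contained) route than the paper. The paper gives two proofs: a constructive one built on a dichotomy (condition (H)) together with product estimates for $\mu(F_k)$, and a probabilistic one that quotes the Erd\H{o}s--R\'ev\'esz theorem on the longest run of identical digits following position $n$. Your argument is closest in spirit to the latter, but instead of invoking the external run-length result you design the events $E_n$ so that a divergent Borel--Cantelli argument applies directly. The key device --- prefixing the run of $2$'s by a forced digit $d_{n+1}=0$ --- does double duty: geometrically it pins $x$ to the right edge of the left child of $I_n(x)$, so that $[x,x+r_n]$ ends in the middle-third gap while $[x-r_n,x]$ captures a definite fraction of the mass of $I_n(x)$; probabilistically it makes overlapping digit blocks \emph{incompatible} (the index $n+1$ would have to carry both digits), so $\mu(E_m\cap E_n)\le\mu(E_m)\mu(E_n)$ for $m\ne n$ and Kochen--Stone gives $\mu(\limsup_n E_n)=1$. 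This is a nice trade: an elementary second-moment computation in place of a citation, at the cost of the interval bookkeeping at scale $r_n=\tfrac56 3^{-n-1}$, which I checked and which works once $k_n\ge 2$. Two minor points to tighten: (i) set $k_n=\max\bigl(2,\lceil\log_2(8/(\varepsilon b_n))\rceil\bigr)$, or restrict to $\varepsilon<4/f(3^{-3})$, so that $k_n\ge 2$ holds for \emph{all} $n$ rather than appealing to ``$n$ large'' (if $f$ is bounded below by a large constant the ceiling could otherwise be nonpositive; the modification does not affect $\sum_n\mu(E_n)=\infty$); (ii) your reduction of \eqref{eq:fassumption} to $\sum_n f(3^{-n})=\infty$ tacitly assumes $f>0$ with the divergence occurring at $0$ --- but both of the paper's own proofs make exactly the same tacit assumption, so this is not a defect of your argument relative to theirs.
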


We give two alternative proofs for this proposition.

\begin{proof}[Constructive proof]
  It suffices to show that
  \begin{equation}\label{eq:claimB}
    \liminf_{r \downarrow 0} \frac{\mu([x,x+r])}{f(r)\mu([x-r,x+r])} \le 1
  \end{equation}
  for $\mu$-almost all $x\in C$ since by scaling the function $f$, this implies \eqref{eq:claimA}.

  Observe that if $\mathcal{I}_k$ is the collection of the $2^k$ construction intervals of $C$ of length $3^{-k}$, then $\mu(I)=2^{-k}$ for each $I\in\mathcal{I}_k$. If $I = [a,c] \in \II_k$, we choose $a<b<c$ such that $\mu([b,c]) = f(3^{-k})\mu(I)$ and denote $I_+ = [b,c]$. Moreover, we set $E_k = C \setminus \bigcup_{I \in \II_k} I_+$. If $x\in C\setminus\bigcup_{N=1}^\infty\bigcap_{k\ge N}E_k$, then \eqref{eq:claimB} holds. Thus it remains to show that
  \begin{equation} \label{eq:claimC}
    \mu\biggl( \bigcap_{k \ge N} F_k \biggr)=0
  \end{equation}
  for all $N \in \N$ where $F_k = \bigcap_{n=N}^{k} E_n$.

  Given $L \in \N$ and $I \in \II_k$, we let $I_L$ be the leftmost sub-construction interval of $I$ of size $3^{-k-L}$. Let us consider the following condition:
  \begin{itemize}
    \item[(H)] There exist $\eps > 0$ and $L \in \N$ such that for all $k \ge N$ there are at least $\eps 2^{k}$ intervals $I \in \II_k$ so that $I_L \cap C \subset F_k$.
  \end{itemize}
  Let us first assume that the condition (H) does not hold. Then, for any $\eps > 0$ and $L \in \N$, we find $k \ge N$ such that for at least $(1-\eps) 2^k$ intervals $I \in \II_k$ we have $I_L \cap C \not\subset F_k$. For any such an interval $I$ there
is $I' \in \bigcup_{n=N}^k \II_n$ such that $I_L \cap I'_+ \neq \emptyset$. As the right endpoint of $I'_+$ is the right endpoint of $I'$, it follows that $I_L \subset I'$. Thus $I \cap F_k \subset I_L$ and, consequently, $\mu(F_k \cap I) \le 2^{-L}\mu(I)$. Putting these estimates together yields
  \[
    \mu(F_k) \le \eps + (1-\eps) 2^{-L}
  \]
  and as $\eps$ and $L$ were arbitrary this implies \eqref{eq:claimC}.

  Now we assume that the condition (H) holds. It follows that $$\mu(F_{k+L}) \le \mu(F_k)\bigl( 1-\eps 2^{-L} f(3^{-k-L}) \bigr)$$ for all $k \ge N$. Using this inductively, we get
  \[
    \mu(F_{N+nL}) \le \prod_{k=1}^n \bigl( 1-\varepsilon 2^{-L} f(3^{-N-kL}) \bigr).
  \]
  Since the condition \eqref{eq:fassumption} is equivalent to $\prod_{k=1}^\infty \bigl( 1-\eps 2^{-L} f(3^{-N-kL}) \bigr) = 0$, this completes the proof.
\end{proof}

The proposition can also be deduced from the classical result of Erd{\H{o}}s and R{\'e}v{\'e}sz (\cite[Theorem 7.2]{Revesz2005}) on the longest length of consequtive zeros appearing in a random sequence of digits. We provide the details below, since we are going to use similar arguments in \S \ref{selfsim} below in connection to self-similar measures.

\begin{proof}[Probabilistic proof]
If $f_1(x) = \tfrac13 x$ and $f_2(x) = \tfrac13 x + \tfrac23$ are the mappings that generate $C$, then in the projection mapping $\pi \colon \{ 1,2 \}^\N \rightarrow C$ the symbol $1$ corresponds to ``left'' and $2$ to ``right''. Given $\iii = i_1i_2 \cdots \in \{ 1,2 \}^\N$, let $\Gamma_n(\iii)$ be the number of consecutive $2$'s in $\iii$ appearing after $\iii|_n = i_1\cdots i_n \in \{1,2\}^n$. Then, for $x=\pi(\iii)$, we have
\begin{equation}\label{eq:base3}
  \mu([x,x+3^{-n}])\le 2^{-\Gamma_n(\iii)}\mu([x-3^{-n},x+3^{-n}]).
\end{equation}

Let $(a_n)$ be a sequence of positive integers. 
The behaviour of $\Gamma_n(\iii)$ was characterised by Erd{\H{o}}s and R{\'e}v{\'e}sz \cite{ErdosRevesz1977}. 
They showed that for $\nu$-almost every $\iii\in\{1,2\}^\N$,
\begin{align}\label{eq:io}
  \Gamma_n(\iii) > a_n \text{ infinitely often}
\end{align}
if and only if $\sum_{n=1}^\infty 2^{-a_n} = \infty$.
Since \eqref{eq:fassumption} implies $\sum_{n=1}^\infty f(3^{-n}) = \infty$, we apply \eqref{eq:io} with $a_n=-\log_2 f(3^{-n})$. Hence, combining this with \eqref{eq:base3}, it follows that for $\mu$-almost all $x=\pi(\iii)$,
\[
  \mu([x,x+3^{-n}])\le 2^{-\Gamma_n(\iii)}\mu([x-3^{-n},x+3^{-n}])\le f(3^{-n})\mu([x-3^{-n},x+3^{-n}])
\]
for infinitely many $n$.
\end{proof}

We finish this section by verifying the claim in Remark \ref{rem:lower_density}(6).

\begin{proposition} \label{thm:falconer}
  If $\mu$ is a self-similar measure on the $\tfrac13$-Cantor set $C$, then
  \begin{equation*}
    \ldimloc(\mu,x) = \liminf_{r \downarrow 0} \frac{\log\mu([x,x+r])}{\log r}
  \end{equation*}
  except for at most countably many points $x \in \R$.
\end{proposition}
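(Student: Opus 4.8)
The plan is to pass to the symbolic coding $\pi\colon\{1,2\}^\N\to C$ associated with $f_1(x)=x/3$ and $f_2(x)=x/3+\tfrac23$, where $\mu=\pi\nu$ for the Bernoulli measure $\nu$ from a probability vector $(p_1,p_2)$, and to show that the only exceptional points are the $x=\pi(\iii)$ for which $\iii$ is eventually constant. These form a countable set, and for $x\notin C$ both one-sided and two-sided ratios are infinite (as $\mu$ vanishes near $x$), so the identity is trivial there. Writing $d^+(x)=\liminf_{r\downarrow0}\log\mu([x,x+r])/\log r$, the inclusion $[x,x+r]\subset B(x,r)$ gives $\mu([x,x+r])\le\mu(B(x,r))$; dividing logarithms by $\log r<0$ and taking $\liminf$ yields $d^+(x)\ge\ldimloc(\mu,x)$ for every $x$. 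The whole content is thus the reverse inequality $d^+(x)\le\ldimloc(\mu,x)$ off a countable set.

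First I would record exact formulas on the co-countable set where $\iii$ is not eventually constant. Let $C_n=f_{\iii|_n}(C)$ be the level-$n$ cylinder containing $x=\pi(\iii)$, an interval of length $3^{-n}$ with $\mu(C_n)=p_{\iii|_n}$. Since $x\in C_n$ we have $C_n\subset B(x,3^{-n})$; and because the gaps separating $C_n$ from its neighbours have length at least $3^{-n}$ while $B(x,3^{-n})$ protrudes at most $3^{-n}$ beyond $C_n$ on either side, the ball meets a neighbouring level-$n$ cylinder only if $x$ is an endpoint of $C_n$, i.e.\ only if $\iii$ is eventually constant. Hence $\mu(B(x,3^{-n}))=p_{\iii|_n}$ for all $n$ once $\iii$ is not eventually constant, and a routine doubling comparison over $r\in[3^{-n-1},3^{-n}]$ gives $\ldimloc(\mu,x)=\liminf_n\phi(n)$, where $\phi(n)=\log p_{\iii|_n}/(-n\log3)=b+\tfrac{k_n}{n}(a-b)$ with $a=-\log p_1/\log3$, $b=-\log p_2/\log3$, and $k_n=\#\{j\le n:i_j=1\}$. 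For the one-sided intervals I only need a lower bound at scales with $i_{n+1}=1$: then $x$ lies in the left third $f_{\iii|_n1}(C)$ of $C_n$, so the right third $f_{\iii|_n2}(C)$ lies to the right of $x$ and inside $[x,x+3^{-n}]$, giving $\mu([x,x+3^{-n}])\ge p_{\iii|_n}p_2$ and therefore $\log\mu([x,x+3^{-n}])/(-n\log3)\le\phi(n)+b/n$.

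The heart of the argument is then purely combinatorial. For $\iii$ with infinitely many $1$'s I must produce scales $m\to\infty$ with $i_{m+1}=1$ along which $\phi(m)\to\liminf_n\phi(n)=\ldimloc(\mu,x)$; inserting these into the last estimate gives $d^+(x)\le\ldimloc(\mu,x)$, hence equality. Choose $n_j\to\infty$ with $\phi(n_j)\to\ldimloc(\mu,x)$. If $i_{n_j+1}=1$ set $m_j=n_j$. Otherwise let $[s_j,e_j]$ be the maximal run of $2$'s containing the position $n_j+1$; it is finite because $\iii$ has infinitely many $1$'s, and $s_j\to\infty$. Along this run $k$ is constant, so $k_{s_j-1}=k_{n_j}=k_{e_j}$ and the map $t\mapsto b+\tfrac{k_{n_j}}{t}(a-b)$ is monotone; as $s_j-1\le n_j\le e_j$, its value $\phi(n_j)$ at the interior point lies between $\phi(s_j-1)$ and $\phi(e_j)$, whence $\min\{\phi(s_j-1),\phi(e_j)\}\le\phi(n_j)$. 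Passing to a subsequence, either $\phi(e_j)\to\ldimloc(\mu,x)$, in which case $m_j=e_j$ works since $i_{e_j+1}=1$; or $\phi(s_j-1)\to\ldimloc(\mu,x)$, in which case $m_j=s_j-2$ works since $i_{s_j-1}=1$ and $\phi(s_j-2)-\phi(s_j-1)=O(1/s_j)$. In every case $i_{m_j+1}=1$ and $\phi(m_j)\to\ldimloc(\mu,x)$.

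The step I expect to be the main obstacle is exactly this combinatorial one. For a self-similar measure the slowest-decay scales, which determine $\ldimloc(\mu,x)$, sit where the frequency of a symbol is extremal, that is, near the ends of long runs; a priori these critical scales might systematically be followed by a long run of $2$'s, so that $[x,x+3^{-m}]$ is badly depleted and the one-sided exponent stays strictly above $\ldimloc(\mu,x)$. The resolution above is that monotonicity of the running average $\phi$ along a constant-symbol run lets me slide any critical scale to an endpoint of the run without changing $\phi$ in the limit, and at such an endpoint the next symbol is $1$, so the right half-interval recovers a fixed proportion $p_2$ of the cylinder mass. Having established equality for all $x=\pi(\iii)$ with $\iii$ not eventually constant, and noting the trivial cases already disposed of, the exceptional set is contained in the countable set $\{\pi(\iii):\iii\text{ eventually constant}\}$, which completes the proof.
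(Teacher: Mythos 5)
Your proof is correct and follows essentially the same route as the paper's: express $\ldimloc(\mu,x)$ as $\liminf_n$ of the normalized cylinder measures, use monotonicity of $n\mapsto -\log\mu(E_{\iii|_n})/n$ on constant-symbol blocks to relocate the minimizing scales to block boundaries, and observe that at such scales the one-sided interval $[x,x+3^{-n}]$ captures a fixed proportion of the cylinder mass. You merely spell out the combinatorial sliding argument (and the restriction to boundaries followed by the symbol $1$) that the paper's proof leaves implicit.
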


\begin{proof}
  It is well known that $\ldimloc\bigl( \mu,\pi(\iii) \bigr) = \liminf_{n \to \infty} -\log\mu(E_{\iii|_n})/(n\log 3)$ for all $\iii \in \{ 1,2 \}^\N$. Observe that $-\log\mu(E_{\iii|_n})/n$ is monotone on each block in which $i_n$ is constant. Thus for all $\iii \in \{ 1,2 \}^\N$ for which there are infinitely many such kind of finite blocks (i.e.\ for all $x = \pi(\iii)$ except at the end-points of the construction intervals) there is a sequence $(n_k)$ such that
  \begin{equation*}
    \lim_{k \to \infty} \frac{-\log\mu(E_{\iii|_{n_k}})}{n_k\log 3} = \ldimloc\bigl( \mu,\pi(\iii) \bigr)
  \end{equation*}
  and $i_{n_k} \ne i_{n_k+1}$. The claim follows since now $\mu([\pi(\iii),3^{-(n_k-1)}])$ is comparable to $\mu(E_{\iii|_{n_k}})$ for all $k \in \N$ (it is possible that $\mu([\pi(\iii),3^{-n_k}])$ is not comparable to $\mu(E_{\iii|_{n_k}})$).
\end{proof}

\section{Dimension of general measures on narrow cones} 

The arguments in this section are based on the standard techniques used to obtain conical density estimates for purely unrectifiable measures. We refer to \cite[\S 15]{Mattila1995} for the basic properties of rectifiable sets.

For $x \in \R^n$, $V \in G(n,n-m)$, $0 \le \alpha \le 1$, and $\beta \ge 1$, we define a twisted cone by setting
\[
  X^\beta(x,V,\alpha) = \{ y \in \R^n : \dist(y-x,V) < \alpha|y-x|^\beta \}.
\]
The following lemma is needed also in \S \ref{selfsim}.

\begin{lemma} \label{thm:pertti}
  Let $\mu$ be a measure on $\R^n$, $A \subset \R^n$, $V \in G(n,n-m)$, $\theta \in S^{n-1}$, $0<\alpha\le 1$, $\beta \ge 1$, and $r>0$. If
  \begin{equation*}
    \mu\bigl( B(x,r) \cap X^\beta(x,V,\alpha) \setminus H(x,\theta,\alpha) \bigr) = 0
  \end{equation*}
  for $\mu$-almost all $x \in A$,
  then $A \cap \spt(\mu)$ is contained in a countable union of images of $\tfrac{1}{\beta}$-H\"older continuous maps $A \cap V^\bot \to \R^n$ and thus, $\dimp(A \cap \spt(\mu)) \le \beta m$. Furthermore, if $\beta=1$, then $A \cap \spt(\mu)$ is $m$-rectifiable.
\end{lemma}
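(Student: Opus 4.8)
The plan is to convert the measure-theoretic hypothesis into a purely geometric separation condition on $\spt(\mu)$, and then to recognise $A\cap\spt(\mu)$ as a countable union of Hölder graphs over $V^\bot$. First I would pass to the full-measure subset
\[
  A_0 = \bigl\{ x \in A : \mu\bigl( B(x,r)\cap X^\beta(x,V,\alpha)\setminus H(x,\theta,\alpha) \bigr) = 0 \bigr\},
\]
so that $\mu(A\setminus A_0)=0$. The decisive observation is that the set $B(x,r)\cap X^\beta(x,V,\alpha)\setminus H(x,\theta,\alpha)$ contains the open set on which both defining inequalities are strict (recall that $X^\beta(x,V,\alpha)$ is open and that the complement of $H(x,\theta,\alpha)$ has interior $\{(y-x)\cdot\theta<\alpha|y-x|\}$). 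Hence, if for some $x\in A_0$ a point $y\in\spt(\mu)$ with $0<|y-x|<r$ satisfied both $\dist(y-x,V)<\alpha|y-x|^\beta$ and $(y-x)\cdot\theta<\alpha|y-x|$, then a whole neighbourhood of $y$ would lie in the bad set and carry positive $\mu$-measure, contradicting $x\in A_0$. Therefore, for every $x\in A_0$ and every $y\in\spt(\mu)$ with $0<|y-x|<r$,
\[
  \dist(y-x,V)\ge\alpha|y-x|^\beta \quad\text{or}\quad (y-x)\cdot\theta\ge\alpha|y-x|.
\]

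The crux is then to apply this at \emph{both} endpoints of a pair $x,y\in A_0\cap\spt(\mu)$ with $0<|x-y|<r$. Since $V$ is a linear subspace we have $\dist(x-y,V)=\dist(y-x,V)$, whereas $(x-y)\cdot\theta=-(y-x)\cdot\theta$. Thus, were the cone alternative $\dist(y-x,V)\ge\alpha|y-x|^\beta$ to fail, the half-space alternative applied at $x$ and at $y$ would force simultaneously $(y-x)\cdot\theta\ge\alpha|y-x|>0$ and $-(y-x)\cdot\theta\ge\alpha|y-x|>0$, which is impossible. Hence the cone alternative always holds, and because $\dist(x-y,V)=|\proj_{V^\bot}(x-y)|$ we obtain
\[
  |\proj_{V^\bot}(x)-\proj_{V^\bot}(y)|\ge\alpha|x-y|^\beta
\]
for all $x,y\in A_0\cap\spt(\mu)$ with $|x-y|<r$.

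Next I would read this as a graph property. Assuming (as we may) that $\spt(\mu)$ is bounded, cover it by countably many balls of diameter less than $r$. On each such ball the projection $\proj_{V^\bot}$ is injective on $A_0\cap\spt(\mu)$, and its inverse $g$ satisfies $|g(u)-g(v)|\le\alpha^{-1/\beta}|u-v|^{1/\beta}$; that is, $g$ is a $\tfrac1\beta$-Hölder map from a subset of $V^\bot$ whose image is the corresponding piece of $A_0\cap\spt(\mu)$. Extending each $g$ continuously to the closure of its domain preserves the $\tfrac1\beta$-Hölder bound and makes the graphs closed, so countably many such graphs cover $A\cap\spt(\mu)$. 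Since a $\tfrac1\beta$-Hölder image of a set of packing dimension at most $m$ has packing dimension at most $\beta m$, and $V^\bot$ is $m$-dimensional, the estimate $\dimp(A\cap\spt(\mu))\le\beta m$ follows. When $\beta=1$ the maps $g$ are Lipschitz, displaying $A\cap\spt(\mu)$ as a countable union of Lipschitz images of subsets of $V^\bot\cong\R^m$, i.e.\ as an $m$-rectifiable set.

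The main obstacle is the first conversion step: turning the hypothesis that a family of cones is $\mu$-null into a condition valid at \emph{every} point of the support relies on the cone and half-space being given by strict (open) inequalities, so that only the topological boundary of the bad set can meet $\spt(\mu)$. A secondary delicate point is that the separation is proved only for pairs of points in the full-measure subset $A_0\cap\spt(\mu)$; passing from there to all of $A\cap\spt(\mu)$ is exactly where one invokes the closedness of the extended Hölder graphs, together with the fact that discarding the $\mu$-null set $A\setminus A_0$ is harmless. I expect the verification that these closed graphs indeed capture the whole of $A\cap\spt(\mu)$ to require the most care.
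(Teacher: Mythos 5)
Your proof is, in its core, exactly the paper's argument. The paper's own proof consists of the citation to \cite[Lemma 15.13]{Mattila1995} together with precisely your ``apply the hypothesis at both endpoints'' observation: the cone condition $\dist(y-x,V)<\alpha|y-x|^\beta$ is symmetric in $x$ and $y$ while the half-space condition is antisymmetric, so if the projection bound failed then one of the two points would lie in the interior of the other's exceptional cone, which is impossible for two support points at which the exceptional cone is $\mu$-null. Your derivation of $|\proj_{V^\bot}(x)-\proj_{V^\bot}(y)|\ge\alpha|x-y|^\beta$, the decomposition into pieces of diameter less than $r$, and the $\tfrac1\beta$-H\"older inverse of $\proj_{V^\bot}$ are all correct and are exactly what the reference to Mattila is meant to supply.

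The one step that does not work as you describe it is the final upgrade from $A_0\cap\spt(\mu)$ to $A\cap\spt(\mu)$. Closedness of the extended graphs buys you at most the closure of $A_0\cap\spt(\mu)$ (and even that needs the covering balls chosen so that every limit point is a limit from within a single ball, since a countable union of closed sets need not be closed). But a point of $(A\setminus A_0)\cap\spt(\mu)$ need not be a limit of points of $A_0$: the $\mu$-null set $A\setminus A_0$ can sit anywhere inside $\spt(\mu)$. In fact, taken literally the lemma cannot hold for all of $A\cap\spt(\mu)$: let $\mu$ be Lebesgue measure on a cube and $A$ a Lebesgue-null, purely $m$-unrectifiable compact subset of it; then every exceptional cone has positive measure, so the hypothesis ``for $\mu$-almost all $x\in A$'' holds vacuously, yet $A\cap\spt(\mu)=A$ is not $m$-rectifiable. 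So no closure argument can repair this step. The honest conclusion --- and the only one the paper ever uses, since the lemma is always invoked through its contrapositive on sets of positive measure --- is the covering of $A_0\cap\spt(\mu)$, i.e.\ of $A\cap\spt(\mu)$ after discarding a $\mu$-null subset of $A$ (equivalently, the stated conclusion under the stronger hypothesis ``for all $x\in A$''). With that reading, your proof is complete and coincides with the paper's; the paper's one-line proof is equally silent on this point.
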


\begin{proof}
  The proof is essentially identical to  that of \cite[Lemma 15.13]{Mattila1995}. One has to just notice that if $x,y \in A \cap \spt(\mu)$ such that $|y-x|<r$ and $|\proj_{V^\bot}(y-x)| < \alpha|y-x|^\beta$, then not only $y \in B(x,r) \cap X^\beta(x,V,\alpha) \cap H(x,\theta,\alpha)$ but also $x \in B(y,r) \cap X^\beta(y,V,\alpha) \setminus H(y,\theta,\alpha)$. Thus $|\proj_{V^\bot}(y-x)| \ge \alpha|y-x|^\beta$ and $(\proj_{V^\bot}|_A)^{-1}$ is the desired mapping. 
\end{proof}

For the lower local dimension in twisted cones, we have the following estimate.

\begin{theorem}\label{thm:twisted}
  If $\mu$ is a measure on $\R^n$, $V\in G(n,n-m)$, $0<\alpha\le 1$, and $\beta\ge 1$, then
  \begin{equation*}
    \liminf_{r \downarrow 0} \frac{\log\mu\bigl( B(x,r) \cap X^\beta(x,V,\alpha) \bigr)}{\log r}\le m(\beta-1)+\ldimloc(\mu,x)
  \end{equation*}
  for $\mu$-almost all $x$ with $\udimloc(\mu,x)>\beta m$.
\end{theorem}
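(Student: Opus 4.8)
The plan is to combine the rectifiability dichotomy of Lemma~\ref{thm:pertti} with the large-cone density estimate from Theorem~\ref{thm:lower_density}. The key observation is that the complement of a twisted cone $X^\beta(x,V,\alpha)$ contains, at scale $r$, a genuine half-space–type region to which Theorem~\ref{thm:lower_density}(2) applies. Suppose, for contradiction, that the claimed inequality fails on a set $A$ of positive measure, so that for each $x \in A$ we have both $\udimloc(\mu,x) > \beta m$ and
\begin{equation*}
  \liminf_{r \downarrow 0} \frac{\log\mu\bigl( B(x,r) \cap X^\beta(x,V,\alpha) \bigr)}{\log r} > m(\beta-1) + \ldimloc(\mu,x).
\end{equation*}
First I would decompose $A$ into countably many pieces on which the relevant quantities are controlled by rational constants, so that I can assume $\ldimloc(\mu,x) < s$ and the liminf above exceeds $m(\beta-1)+s$ uniformly on a positive-measure piece $A'$; passing to such a piece reduces everything to uniform scale-wise estimates.

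\textbf{The core mechanism.} On $A'$, the lower bound on the twisted-cone liminf means that for all small $r$, the mass inside the narrow twisted cone is negligibly small relative to $r^{m(\beta-1)+s}$, while the choice $\ldimloc(\mu,x) < s$ forces $\mu(B(x,r))$ to exceed $r^s$ along a sequence of scales. The ratio $\mu(B(x,r)\cap X^\beta(x,V,\alpha))/\mu(B(x,r))$ therefore decays faster than $r^{m(\beta-1)}$ along these scales. The aim is to upgrade this into the hypothesis of Lemma~\ref{thm:pertti}, namely that the mass of $B(x,r)\cap X^\beta(x,V,\alpha)$ outside some half-space $H(x,\theta,\alpha)$ vanishes. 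Here I would invoke Theorem~\ref{thm:lower_density}(2) with a suitable admissible function $f$: it guarantees that the mass in $B(x,r)\setminus H(x,\theta,\alpha)$ is at least $f(r)\mu(B(x,r))$ for $\mu$-a.e.\ $x$ and all $\theta$. If the twisted-cone mass were so small that it could not even account for the guaranteed half-space mass, one would conclude that the cone contributes essentially none of the mass outside the half-space, pushing toward the rectifiability conclusion. Applying Lemma~\ref{thm:pertti} then gives $\dimp(A' \cap \spt(\mu)) \le \beta m$, hence $\udimloc(\mu,x) \le \beta m$ for $\mu$-a.e.\ $x \in A'$, contradicting $\udimloc(\mu,x) > \beta m$ on $A'$.

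\textbf{Main obstacle.} The delicate point is quantitatively matching the scales. Lemma~\ref{thm:pertti} requires the \emph{exact} vanishing of $\mu(B(x,r)\cap X^\beta(x,V,\alpha)\setminus H(x,\theta,\alpha))$ at a fixed positive $r$, whereas the dimensional hypotheses only provide asymptotic decay as $r \downarrow 0$, and only along subsequences of scales for the liminf. I expect the hardest part to be bridging this gap: one must produce, from the strict dimensional inequality, a regime of scales where the twisted-cone mass is dominated so sharply that the residual mass outside every half-space is forced to be zero, likely by choosing $f$ to decay slower than $r^{m(\beta-1)+s-\ldimloc(\mu,x)}$ yet still satisfy the integrability condition~\eqref{eq:int}, and then playing the guaranteed lower bound $f(r)\mu(B(x,r))$ against the upper bound on the cone mass. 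Managing the factor $m(\beta-1)$, which encodes the discrepancy between the $\tfrac1\beta$-Hölder (packing dimension $\beta m$) rectifiability furnished by Lemma~\ref{thm:pertti} and the twisted-cone exponent, is where the geometry of $X^\beta$ versus the linear half-space $H(x,\theta,\alpha)$ must be handled carefully, presumably via a scale-by-scale covering argument in the spirit of the proof of Theorem~\ref{thm:lower_density}.
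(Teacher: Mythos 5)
Your proposal contains a genuine gap at exactly the point you flag as the ``main obstacle,'' and that gap is not bridgeable by the route you describe. Lemma \ref{thm:pertti} needs the \emph{exact} vanishing of $\mu\bigl(B(x,r)\cap X^\beta(x,V,\alpha)\setminus H(x,\theta,\alpha)\bigr)$ at some fixed $r>0$ for $\mu$-a.e.\ $x$ in the exceptional set; no rate of decay of $\mu\bigl(B(x,r)\cap X^\beta(x,V,\alpha)\bigr)/\mu\bigl(B(x,r)\bigr)$, however fast, implies that a measure is zero rather than merely small, so there is no choice of $f$ in Theorem \ref{thm:lower_density}(2) that ``forces the residual mass outside every half-space to be zero.'' Moreover, Theorem \ref{thm:lower_density}(2) is the wrong tool here: it lower-bounds the mass of $B(x,r)\setminus H(x,\theta,\alpha)$, a large region having no particular relation to the narrow twisted cone, so learning that the twisted-cone mass ``cannot account for'' the half-space-complement mass only tells you that most of that mass lies outside the cone --- which gives no information about the quantity Lemma \ref{thm:pertti} requires to vanish. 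Your argument as written proves nothing beyond the qualitative statement that the cone cannot carry all the mass, and it never produces the exponent $m(\beta-1)$.

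The paper's proof is instead a quantitative covering argument in the style of \cite[Lemma 15.14]{Mattila1995}, and Lemma \ref{thm:pertti} enters only in the contrapositive, to guarantee that a certain reach function is positive. One assumes the upper bound \eqref{sd} on a set $E$ of positive measure with $\ldimloc(\mu,x)<s$, picks a density point $x_0$ and a scale $r$ with $\mu\bigl(E\cap B(x_0,r)\bigr)\gtrsim r^s$ as in \eqref{eq:dims}, and defines $h(x)$ as the farthest point of $E\cap B(x_0,r)$ inside a slightly wider twisted cone $X^\gamma(x,V,\alpha)$ with $\gamma>\beta$; here $\udimloc(\mu,x)>\beta m$ and Lemma \ref{thm:pertti} give $h(x)>0$ a.e. The geometric inclusion \eqref{strips} shows each slab $C(x)=B(x_0,r)\cap\proj_{V^\perp}^{-1}\bigl(\proj_{V^\perp}B(x,\alpha h(x)^\gamma)\bigr)$ is covered by two twisted cones of radius $4h(x)$, whose measures are controlled by \eqref{sd}; a $5r$-covering of the projections to $V^\perp$ yields $\sum_i h(x_i)^{\gamma m}\lesssim r^m$, and summing $h(x_i)^{m(\gamma-1)+s}=h(x_i)^{s-m}h(x_i)^{\gamma m}\le (2r)^{s-m}h(x_i)^{\gamma m}$ gives $\mu\bigl(E\cap B(x_0,r)\bigr)\lesssim r^s$, contradicting \eqref{eq:dims}. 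This interplay between the $\HH^m$-packing of the projections and the assumed cone bound is precisely where the term $m(\beta-1)$ originates, and it has no counterpart in your proposal.
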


%

\begin{proof}
Assume to the contrary that there are $r_0>0$, $\beta<\gamma<n/m$, $s>\beta m$, and a Borel set $E\subset\R^n$ with $\mu(E)>0$ such that $\ldimloc(\mu,x)<s$ and
\begin{equation}\label{sd}
  \mu\bigl( B(x,r) \cap X^\beta(x,V,\alpha) \bigr) < r^{m(\gamma-1)+s}
\end{equation}
for all $x\in E$ and $0<r<r_0$.

Since $\mu$-almost all points of $E$ are density points (\cite[Corollary 2.14]{Mattila1995}) and $\ldimloc(\mu,x) < s$ for all $x \in E$, there are $x_0 \in E$ and arbitrary small $0<r<r_0$ so that
\begin{equation} \label{eq:dims}
  \mu\bigl( E \cap B(x_0,r) \bigr) > 2^{s-m+1} \cdot 16^n \cdot 10^m \cdot \alpha^{-m} r^s.
\end{equation}
Fix such a radius so that $r^{\gamma-\beta} < 4^{-\beta}$.

For each $x \in E \cap B(x_0,r/2)$, we define $$h(x) = \sup\{|y-x| : y \in E \cap B(x_0,r) \cap X^\gamma(x,V,\alpha) \}.$$ Since $\udimloc(\mu|_E,x) > \beta m$ for $\mu$-almost every $x \in E$, Lemma \ref{thm:pertti} implies that $\mu|_E\bigl( B(x,r/2) \cap X^\beta(x,V,\alpha) \bigr) > 0$, and, consequently, $0<h(x)<2r$ for $\mu$-almost all $x \in E \cap B(x_0,r/2)$.

Moreover, by simple geometric inspections, we find that for
\[
  C(x) = B(x_0,r) \cap \proj_{V^\perp}^{-1}\bigl( \proj_{V^\perp}B(x,\alpha h(x)^\gamma) \bigr)
\]
we have
\begin{equation}\label{strips}
  C(x) \subset \bigl( B(x,4h(x))\cap X^\beta(x,V,\alpha) \bigr) \cup \bigl( B(y,4 h(x))\cap X^\beta(y,V,\alpha) \bigr)
\end{equation}
for some $y \in E \cap B(x_0,r) \cap X^{\gamma}(x,V,\alpha)$. See Figure \ref{fig:twisted} and recall the proofs of \cite[Lemma 15.14]{Mattila1995} and \cite[Theorem 5.1]{Kaenmaki2010}.
  \begin{figure}
    \psfrag{x}{$x$}
    \psfrag{y}{$y$}
    \psfrag{V}{$V$}
    \begin{center}
      \includegraphics[width=0.9\textwidth]{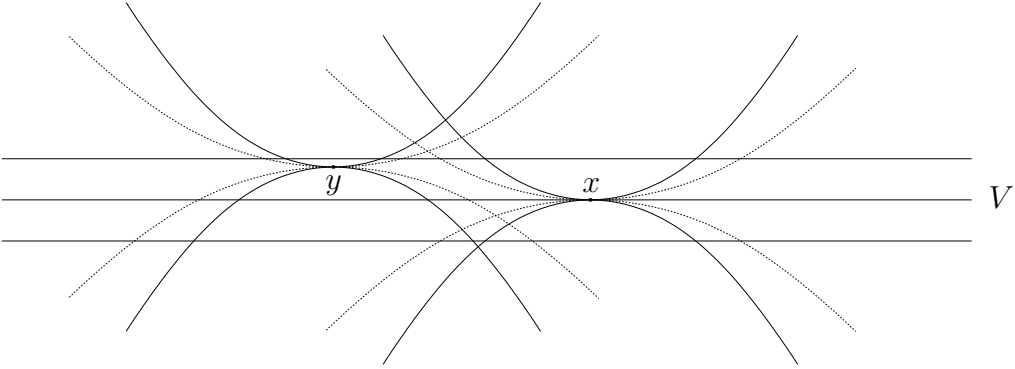}
    \end{center}
    \caption{The choice $r^{\gamma-\beta} < 4^{-\beta}$ guarantees that $C(x)$ is contained in the union of $X^\beta(x,V,\alpha)$ and $X^\beta(y,V,\alpha)$ illustrated by the solid curves in the picture.}
    \label{fig:twisted}
  \end{figure}

By the $5r$-covering theorem, we find a countable collection of pairwise disjoint balls $\{ \proj_{V^\bot}B(x_i,\alpha h(x_i)^\gamma/5) \}_i$ with $x_i \in E \cap B(x_0,r)$ so that
\begin{equation*}
  \bigcup_{h(x)>0} \proj_{V^\bot}B\bigl( x,\alpha h(x)^\gamma \bigr) \subset \bigcup_i \proj_{V^\bot}B\bigl( x_i,\alpha h(x_i)^\gamma \bigr).
\end{equation*}
Observe that we have
\begin{align*}
\sum_i 2^m \alpha^m h(x_i)^{\gamma m}/5^m &= \sum_i \HH^m \bigl( \proj_{V^\bot}B(x_i,\alpha h(x_i)^\gamma/5) \bigr) \\
 &\le \HH^m\bigl( \proj_{V^\bot}B(x_0,2r) \bigr) = 2^m(2r)^m
 \end{align*}
  and $\mu\bigl( E \cap B(x_0,r) \bigr) \le \sum_i \mu\bigl( C(x_i) \bigr)$. Here we assumed that the Hausdorff measure is normalized. Combining these estimates with \eqref{strips}, \eqref{sd}, and the fact that $h(x_i)^{s-m} \le (2r)^{s-m}$ gives
\begin{align*}
\mu\bigl( E \cap B(x_0,r) \bigr) &\le \sum_i\mu\bigl( C(x_i) \bigr) \le 2 \cdot 4^{\gamma m+s-m}\sum_i h(x_i)^{\gamma m+s-m}\\
&\le 2 \cdot 16^n\sum_i h(x_i)^{s-m}h(x_i)^{\gamma m} < 2^{s-m+1} \cdot 16^n \cdot 10^m \alpha^{-m} r^s,
\end{align*}
a contradiction with \eqref{eq:dims}.
\end{proof}

\begin{remark}
The upper bound of Theorem \ref{thm:twisted} is seen to be sharp by considering Hausdorff measures on $V^\perp\times C$, for self-similar Cantor sets $C\subset V$.
%
%
%
\end{remark}

For the ordinary cones we get the following stronger result. This should be compared to Theorem \ref{thm:twisted} when $\beta\downarrow 1$.
We formulate the result for purely unrectifiable measures to cover also the case $\udimloc(\mu,x)=m$. A measure $\mu$ on $\R^n$ is called \emph{purely $m$-unrectifiable} if $\mu(A)=0$ for all $m$-rectifiable sets $A \subset \R^n$. Observe that $\mu$ restricted to the set $\{ x \in \R^n : \udimloc(\mu,x)>m \}$ is purely $m$-unrectifiable.

\begin{theorem} \label{thm:lower_dim}
  If $\mu$ is a measure on $\R^n$ and $0 < \alpha \le 1$, then
  \begin{equation*}
  \ldimloc(\mu,x) = \liminf_{r \downarrow 0} \sup_{V \in G(n,n-m)} \frac{\log\mu\bigl( B(x,r) \cap X(x,V,\alpha) \bigr)}{\log r}
  \end{equation*}
  for $\mu$-almost all $x \in \R^n$ with $\udimloc(\mu,x) \ge m$ provided that $\mu$ is purely $m$-unrectifiable.
\end{theorem}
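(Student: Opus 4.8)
The inequality ``$\ge$'' is immediate and holds at \emph{every} point: since $B(x,r)\cap X(x,V,\alpha)\subset B(x,r)$ we have $\mu(B(x,r)\cap X(x,V,\alpha))\le\mu(B(x,r))$, and dividing the logarithms by $\log r<0$ reverses the inequality, so that
\[
  \sup_{V\in G(n,n-m)}\frac{\log\mu(B(x,r)\cap X(x,V,\alpha))}{\log r}\ge\frac{\log\mu(B(x,r))}{\log r}
\]
for all $0<r<1$; taking $\liminf_{r\downarrow0}$ gives the lower bound $\ldimloc(\mu,x)$. Thus the entire content of the theorem is the reverse inequality, and the plan is to prove it by contradiction along the lines of the proof of Theorem \ref{thm:twisted} with $\beta=1$.

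Suppose the reverse inequality fails on a set of positive measure. Passing to rational thresholds and to a subset, I would fix $s'<s$ and a Borel set $E$ with $\mu(E)>0$ such that every $x\in E$ satisfies $\ldimloc(\mu,x)<s'$, $\udimloc(\mu,x)\ge m$, and $\liminf_{r\downarrow0}\sup_V \log\mu(B(x,r)\cap X(x,V,\alpha))/\log r>s$. Unwinding the definitions (and using $\log r<0$), the last condition provides $r_0>0$ so that for every $0<r<r_0$ there is a \emph{light} plane $V=V(x,r)$, i.e. one with $\mu(B(x,r)\cap X(x,V,\alpha))<r^{s}$, while $\ldimloc(\mu,x)<s'$ yields arbitrarily small radii with $\mu(B(x,r))>r^{s'}$. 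Along these radii $\mu(B(x,r)\cap X(x,V,\alpha))<r^{s}=o(\mu(B(x,r)))$, so essentially all of the mass in $B(x,r)$ lies in the complementary region $B(x,r)\setminus X(x,V,\alpha)$, which clusters around the $m$-plane $x+V^\perp$.

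To convert this local flatness into a contradiction I would first remove the dependence of $V$ on the point and on the scale by compactness. Covering $G(n,n-m)$ by finitely many balls and using that $X(x,V',\alpha/2)\subset X(x,V,\alpha)$ whenever $V'$ is close to $V$, I can replace each $V(x,r)$ by a member of a fixed finite net $V_1,\dots,V_N$; since the resulting cone is smaller, the bound $\mu(B(x,r)\cap X(x,V_j,\alpha/2))<r^{s}$ persists. A pigeonhole over the net then yields an index $j$ and a positive-measure subset of $E$, still denoted $E$, on which a single plane $V:=V_j$ is light along a sequence of radii tending to $0$. On this set I would run the covering argument of Theorem \ref{thm:twisted}: choose a density point $x_0$ and a good radius $r$ with $\mu(E\cap B(x_0,r))$ large, define $h(x)$ as the largest extent of $E\cap B(x_0,r)$ inside a slightly wider cone around $V$, cover the projected balls $\proj_{V^\perp}B(x,\alpha h(x))$ via the $5r$-covering theorem, and bound $\mu(E\cap B(x_0,r))\le\sum_i\mu(C(x_i))$ through the light-cone estimate and a Hausdorff-measure packing bound. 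Here pure $m$-unrectifiability enters via Lemma \ref{thm:pertti} with $\beta=1$: if $\mu(B(x,r/2)\cap X(x,V,\alpha))=0$ on a positive-measure set, that set would be $m$-rectifiable (apply Lemma \ref{thm:pertti} with any $\theta$), contradicting pure unrectifiability; hence $h(x)>0$ almost everywhere and the strips $C(x)$ are genuinely present. This is exactly the place where the hypothesis $\udimloc(\mu,x)\ge m$ together with pure unrectifiability replaces the strict inequality $\udimloc>m$ of Theorem \ref{thm:twisted} and covers the borderline case $\udimloc(\mu,x)=m$.

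The main obstacle is the uniformity in $V$. The light plane supplied by the failure of the theorem depends on both the point and the scale, whereas the projection--packing estimate is built around a single orthogonal complement $V^\perp$. The net reduction fixes the plane on a positive-measure set, but only along a subsequence of radii, so the delicate point is to match the radii $4h(x_i)$ arising in the covering with radii at which the cone around the fixed $V$ is actually light. I expect this to be handled by choosing the good scale $r$ (hence the relevant $h(x_i)$) from within the light subsequence and by exploiting the monotonicity $\mu(B(x,\rho)\cap X(x,V,\alpha))\le\mu(B(x,\rho')\cap X(x,V,\alpha))$ for $\rho\le\rho'$ to absorb the residual scale mismatch into the constants, possibly after refining $E$ so that light radii occur with positive frequency. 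Once the cone directions are pinned down, the remaining estimate is the same as in Theorem \ref{thm:twisted} and delivers the contradiction.
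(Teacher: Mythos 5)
Your proposal follows the paper's proof essentially verbatim: the paper's entire argument reads ``by a simple compactness argument it suffices to show the claim for a fixed $V$; after this observation, the proof continues as the proof of Theorem \ref{thm:twisted}'', and you have correctly supplied the intended details, in particular the role of Lemma \ref{thm:pertti} and pure $m$-unrectifiability in guaranteeing $h(x)>0$ so that the borderline case $\udimloc(\mu,x)=m$ is covered. The ``main obstacle'' you flag --- that the light plane produced by the contradiction hypothesis depends on both the point and the scale, whereas the projection--packing argument needs one fixed $V$ that is light at all the radii $4h(x_i)\in(0,8r]$ --- is exactly what the paper compresses into ``a simple compactness argument'', and your tentative fix via monotonicity is no less (and no more) complete than what the paper itself provides at that step.
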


\begin{proof}
  By a simple compactness argument (see \cite[Remark 4.4]{CsornyeiKaenmakiRajalaSuomala2010}) it suffices to show the claim for a fixed $V \in G(n,n-m)$. After this observation, the proof continues as the proof of Theorem \ref{thm:twisted}.
\end{proof}

\begin{remark} \label{rem:lower_dim}
  (1) Observe that for $\mu$-almost all $x \in \R^d$ with $$\udimloc(\mu,x) = \ldimloc(\mu,x) > m,$$ the claim of Theorem \ref{thm:lower_dim} follows from \cite[Theorem 5.1]{KaenmakiRajalaSuomala2010a}, and, in fact, we may replace the cone $X(x,V,\alpha)$ by a non-symmetric cone $X(x,V,\alpha) \setminus H(x,\theta,\alpha)$ and take the supremum over all $V \in G(n,n-m)$ and $\theta \in S^{n-1}$. We do not know whether Theorem \ref{thm:lower_dim} holds for non-symmetric cones in general. This question is interesting already in the case $m=n-1$. Recall also \cite[Theorem 5.1]{Kaenmaki2010}.

  (2) Recall from \cite[Example 5.5]{CsornyeiKaenmakiRajalaSuomala2010}, that for $\ell\in G(2,1)$, there is a measure $\mu$ on $\R^2$ so that $\mu$ is purely $1$-unrectifiable and for every $0<\alpha<1$
  \begin{equation*}
    \lim_{r \downarrow 0} \frac{\mu\bigl( B(x,r) \cap X(x,\ell,\alpha) \bigr)}{\mu\bigl( B(x,r) \bigr)} = 0
  \end{equation*}
  for $\mu$-almost all $x \in \R^2$. It is therefore interesting to ask if it is possible to obtain finer information on the ratio $\mu\bigl( B(x,r) \cap X(x,V,\alpha) \bigr)/\mu\bigl( B(x,r) \bigr)$ for arbitrary small $r>0$. See also Proposition \ref{thm:example}.
\end{remark}

If the dimension of a measure is small it is often the case that the limit superior (resp.\ inferior) of $\log\mu\bigl( B(x,r) \cap X(x,V,\alpha) \bigr)/\log r$ is strictly larger than the upper (resp.\ lower) local dimension of $\mu$ at $x$. For example, this is the case when $\spt(\mu)$ is contained in a rectifiable curve. Perhaps surprisingly, this behaviour is possible also if the local dimension of the measure is large.

Besides exhibiting the previously described phenomenon, the example in the following proposition shows that Theorem \ref{thm:lower_dim} cannot hold for the upper local dimension. The observation that lower conical dimensions are often more regular (or less ``multifractal'') than the upper ones was found in the one dimensional situation already by Falconer \cite{Falconer2004}; see Remark \ref{rem:lower_density}(6).


\begin{proposition} \label{thm:example}
  If $\ell \in G(n,1)$ and $0<\alpha<1$, then for every $1<s<t<2$ there is a measure $\mu$ on $\R^2$ such that $\ldimloc(\mu,x)=s$ and
  \begin{equation*}
    \limsup_{r \downarrow 0} \frac{\log\mu\bigl( B(x,r) \cap X(x,\ell,\alpha) \bigr)}{\log r} = s\,\frac{t-1}{s-1} > t = \udimloc(\mu,x)
  \end{equation*}
  for $\mu$-almost all $x \in \R^2$.
\end{proposition}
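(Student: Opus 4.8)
The plan is to construct $\mu$ explicitly as the natural measure of an inhomogeneous Moran (Cantor) construction in the square $[0,1]^2$, assuming without loss of generality that $\ell$ is the first coordinate axis, so that $X(x,\ell,\alpha)=\{y:|y_2-x_2|<\alpha|y-x|\}$ is the horizontal two-sided cone. Fix an integer base $b\ge 3$. The construction runs in base-$b$ generations, and at each generation I apply to every surviving cell one of two operations: a \emph{dense} step, replacing a cell of side $\rho$ by all $b^2$ subcells of side $\rho/b$ (local dimension $2$), or a \emph{vertical} step, replacing it by the $b$ subcells forming the central vertical column (local dimension $1$), and I let $\mu$ distribute mass uniformly among surviving cells. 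Writing $Q_n(x)$ for the level-$n$ cell containing $x$ and $\Phi(n)=-\log_b\mu(Q_n(x))$ for the cumulative dimension, the running exponent $\Phi(n)/n$ increases towards $2$ during dense phases and decreases towards $1$ during vertical phases. I will choose the phase boundaries greedily: start a vertical phase as soon as $\Phi(n)/n$ reaches $t$, and return to a dense phase as soon as it drops to $s$. Since $1<s<t<2$, the exponent then oscillates within $[s,t]$, attaining both endpoints infinitely often, so that $\ldimloc(\mu,x)=s$ and $\udimloc(\mu,x)=t$ for $\mu$-almost every $x$ (the exceptional set being the countably many multiple-address points).

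The identity comes from elementary level-arithmetic. If a vertical phase runs from level $m$, where $\Phi(m)=tm$, to level $m'$, where $\Phi(m')=sm'$, then since local dimension $1$ gives $\Phi(m')=\Phi(m)+(m'-m)$, one has $sm'=tm+(m'-m)$, whence
\[
  \frac{m'}{m}=\frac{t-1}{s-1},\qquad \Phi(m')=sm'=s\,\frac{t-1}{s-1}\,m .
\]
For the cone estimate the key geometric point is that a central vertical column is almost invisible to the horizontal cone: if $x$ lies in such a column, every other cell of the column sits directly above or below $x$, i.e.\ in a near-vertical direction, hence disjoint from $X(x,\ell,\alpha)$ precisely because $\alpha<1$. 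Thus the cone mass is \emph{frozen} throughout a vertical phase: for $m\le n\le m'$ one gets $\mu\bigl(Q_n(x)\cap X(x,\ell,\alpha)\bigr)\asymp\mu\bigl(Q_{m'}(x)\cap X(x,\ell,\alpha)\bigr)\asymp\mu(Q_{m'}(x))=b^{-\Phi(m')}$, the last comparison being the normal (comparable-to-full) cone mass of the dense structure resuming below level $m'$. Probing at $r\asymp b^{-n}$ with $n=m+1$ (one generation into the vertical phase) then gives a cone exponent $\Phi(m')/(m+1)$, tending to $\Phi(m')/m=s\,\tfrac{t-1}{s-1}=u$ as $m\to\infty$. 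A matching upper bound holds because at every other scale the cone catches a fixed fraction of the ball — during dense phases and at the fine end of vertical phases — so the cone exponent there never exceeds $t<u$; hence the limsup equals exactly $u$.

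I expect the main technical obstacle to be the passage from the cell $Q_n(x)$ to the genuine ball $B(x,r)$ in the cone estimate, namely showing that $B(x,r)$ does not collect a non-negligible amount of cone mass from cells horizontally adjacent to $Q_m(x)$, which would come from the surrounding dense structure and destroy the decay. This is exactly why I probe one generation \emph{inside} the vertical phase (scale $r\asymp b^{-(m+1)}\ll\rho$) rather than at scale $\rho$ itself: the central-column step isolates $x$ horizontally within $Q_m(x)$ by margins of order $\rho/2$, so for $r\le\rho/b$ the ball $B(x,r)$ meets no horizontal sibling of $Q_m(x)$, its only nontrivial neighbours being vertical and hence outside the cone. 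Establishing this separation uniformly for $\mu$-almost every $x$, together with the routine comparison $\mu(B(x,r))\asymp\mu(Q_n(x))$, is where the care lies; the dimension bookkeeping and the frozen-cone computation are then immediate.
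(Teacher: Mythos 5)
Your construction is correct and is essentially the paper's: both alternate a uniform-spreading phase (local exponent $2$) with a phase concentrating mass onto a row/column perpendicular to the cone axis (local exponent $1$), switch phases when the running exponent reaches $t$ resp.\ $s$, and read off the exponent $s\,\tfrac{t-1}{s-1}$ from the frozen cone mass created by a concentration phase. The only differences are cosmetic: the paper realises each phase as a single subdivision with a large, scale-dependent branching number and concentrates mass onto the bottom row against a vertical cone, whereas you use a fixed base $b$ over many generations and a central column against a horizontal cone --- your centred choice conveniently supplies the two-sided margins needed for the ball-versus-cell comparison that the paper leaves implicit.
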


\begin{proof}
  We may assume that $\ell$ is the $y$-axis. Let us denote the side-length of a square $Q$ by $|Q|$. In this proof, all the squares have sides parallel to the coordinate axes. We construct the measure $\mu$ using the mass distribution principle and the following two operations:
  \begin{itemize}
    \item[(1)] Suppose $Q \subset \R^2$ is a square with $|Q| \le 1$ and $\mu(Q) \ge |Q|^t$. Let $n$ be an integer such that $n-1 \le (|Q|^{-t} \mu(Q))^{1/(2-t)} < n$. Divide $Q$ into $n^2$ subsquares of side-length $|Q|/n$ and assign a measure
$\mu(Q)/n^2$ to each of these subsquares.
    \item[(2)] Suppose $Q \subset \R^2$ is a square with $|Q| \le 1$ and $\mu(Q) \le |Q|^s$. Let $n$ be an integer such that $n-1 \le (|Q|^{s}/\mu(Q))^{1/(s-1)} < n$. Divide $Q$ into $n^2$ subsquares of side-length $|Q|/n$ and assign a measure $\mu(Q)/n$ to the subsquares in the bottom row and measure $0$ to others.
  \end{itemize}
  We construct $\mu$ by first applying the operation $(1)$ with $Q=[0,1]\times[0,1]$ and $\mu(Q)=1$. Then we apply the operation $(2)$ for each of the $4$ subsquares of side-length $1/2$ and the operation $(1)$ for the squares in their bottom row. We continue in this manner. It is easy to see 
  that for the resulting measure, we have $\udimloc(\mu,x) = t$ and $\ldimloc(\mu,x)=s$ for all $x \in \spt(\mu)$.

  To show the claim on the conical dimension, observe first that $\log\mu\bigl( B(x,r) \cap X(x,\ell,\alpha) \bigr)/\log r$ obtains a local maximum right after the operation (1). Let $Q$ be a square as in the operation (2). Then $X(x,\ell,\alpha)$ intersects at most constant many squares $Q'$ in the bottom row of $Q$ and its horizontal neighbours. Thus the estimate
  \begin{equation*}
    \frac{\log\mu(Q')}{\log|Q|} = \frac{\log\tfrac{1}{n}\mu(Q)}{\log|Q|} \ge \frac{\log\mu(Q)^{1+1/(s-1)}|Q|^{-s/(s-1)}}{\log|Q|} = \frac{s}{s-1}t - \frac{s}{s-1}
  \end{equation*}
  implies the claim.
\end{proof}

\section{Dimension of self-similar measures on narrow cones} 
\label{selfsim}

Finally, we turn our attention to self-similar sets and consider measures on narrow cones around $(n-m)$-planes.

\begin{theorem} \label{thm:selfsimilar}
  Let $\mu$ be a self-similar measure on a self-similar set $E \subset \R^n$ satisfying the open set condition.
  If $\mu$ is purely $m$-unrectifiable and $0 < \alpha \le 1$, then there is $1 < s = s(\mu,n,m,\alpha) < \infty$ so that
  \[
    \liminf_{r\downarrow 0}\inf_{\yli{\theta \in S^{n-1}}{V\in G(n,n-m)}} \frac{\mu\bigl( B(x,r) \cap X(x,V,\alpha) \setminus H(x,\theta,\alpha) \bigr)}{|\log r|^{-s}\mu\bigl( B(x,r) \bigr)} \ge 1
  \]
  for $\mu$-almost all $x \in \R^n$.
\end{theorem}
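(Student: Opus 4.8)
The plan is to exploit the exact self-similarity of $\mu=\pi\nu$ together with a longest-run estimate of Erd\H{o}s--R\'ev\'esz type, in the spirit of the probabilistic proof of Proposition \ref{ex:sharp}. Under the open set condition the mass $\mu\bigl(B(x,r)\bigr)$ is comparable to $\mu(E_{\iii|_n})=p_{\iii|_n}$ whenever $x=\pi(\iii)$ and $r$ is comparable to $\diam(E_{\iii|_n})$, and along these scales $|\log r|\asymp n$. Hence the assertion, with the factor $|\log r|^{-s}$, is equivalent to showing that for $\mu$-almost every $x=\pi(\iii)$, every large $n$, and every $(V,\theta)\in G(n,n-m)\times S^{n-1}$, the cone $X(x,V,\alpha)\setminus H(x,\theta,\alpha)$ captures at least a fraction comparable to $n^{-s}$ of $\mu\bigl(B(x,r)\bigr)$. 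Rescaling $B(x,r)$ to unit size turns $\mu$ restricted to $E_{\iii|_n}$ into a rotated copy of $\mu$ with vertex $\pi(\sigma^n\iii)$ and a cone direction rotated by the similarity $f_{\iii|_n}$; it therefore suffices to find, inside this rescaled copy, a cylinder contained in the (rotated) cone whose depth is $O(\log n)$, since a cylinder of depth $j$ contributes relative mass at least $p_{\min}^{\,j}$, where $p_{\min}=\min_i p_i$.

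The key, and I expect the hardest, ingredient is a uniform structural lemma: there exist $\ell\in\N$ and $p_0>0$, depending only on $\mu,n,m,\alpha$, such that for every $(V,\theta)\in G(n,n-m)\times S^{n-1}$,
\[
  \nu\bigl(\{\jjj\in\Sigma : \exists\,\hhh\in\Sigma_* \text{ with } |\hhh|\le\ell \text{ and } E_\hhh\subset X(\pi(\jjj),V,\alpha)\setminus H(\pi(\jjj),\theta,\alpha)\}\bigr)\ge p_0.
\]
The starting point is qualitative: for a fixed direction, pure $m$-unrectifiability together with Lemma \ref{thm:pertti} (with $\beta=1$) forces $\mu\bigl(\{z:\mu(B(z,\diam(E))\cap X(z,V,\alpha)\setminus H(z,\theta,\alpha))=0\}\bigr)=0$, as otherwise Lemma \ref{thm:pertti} would render a set of positive $\mu$-measure $m$-rectifiable. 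Letting the cylinder depth grow then yields, for each fixed direction, a positive probability of finding a fixing cylinder of bounded depth. The difficulty is to make $\ell$ and $p_0$ uniform in $(V,\theta)$, the obstruction being that $X\setminus H$ is neither open nor closed, so the cone mass is not obviously continuous in the direction. I would resolve this exactly as in the compactness argument of Lemma \ref{thm:covering}(2): for directions near a fixed $(V,\theta)$ the relevant cone contains a cone of slightly smaller aperture about $(V,\theta)$, to which the qualitative positivity still applies; covering the compact set $G(n,n-m)\times S^{n-1}$ by finitely many such neighbourhoods and taking the minimal constants gives the uniform $\ell,p_0$.

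With the structural lemma available I would group the symbols of $\iii$ into consecutive blocks of length $\ell$ and use the independence of coordinates under the Bernoulli measure $\nu$. The event that the block beginning at level $n$ contains a fixing cylinder has conditional probability at least $p_0$ \emph{regardless} of the rotated direction determined by $\iii|_n$; this is precisely where uniformity over all directions is used. The first depth $j^\ast$ below level $n$ at which a fixing cylinder appears is thus bounded by $\ell$ times the length of a run of failures, and the Erd\H{o}s--R\'ev\'esz estimate on the longest run (\cite{ErdosRevesz1977}, \cite[Theorem 7.2]{Revesz2005}) gives $j^\ast=O(\log n)$ for all large $n$, $\nu$-almost surely and simultaneously for the finitely many directions in the net. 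Such a cylinder contributes relative mass at least $p_{\min}^{\,j^\ast}$, which is comparable to $n^{-s}$ for a suitable $s$ governed by $|\log p_{\min}|$ and the run constant; enlarging $s$ slightly absorbs the comparability constants and handles scales $r$ lying between consecutive cylinder diameters, which completes the argument.
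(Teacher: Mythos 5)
Your overall strategy---a uniform ``fixing cylinder'' lemma extracted from pure $m$-unrectifiability via Lemma \ref{thm:pertti} together with compactness of $G(n,n-m)\times S^{n-1}$, followed by a run-length/Borel--Cantelli argument showing that a fixing block of depth $O(\log n)$ appears below every large level $n$---is essentially the proof the paper gives. The paper packages the structural step a little differently: it produces a \emph{single} word $\hhh\in\Sigma_l$ such that for every $\iii\in\Sigma_*$, every vertex $y\in E_{\iii\hhh}$, and every pair $(V,\theta)$, some depth-$l$ cylinder $E_{\iii\jjj}$ lies in $X(y,V,\alpha)\setminus H(y,\theta,\alpha)$. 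This makes the good event a genuine cylinder event, measurable with respect to one block of coordinates, independent of the tail of the word, and simultaneous over all directions; your per-direction formulation with vertex $\pi(\jjj)$ only achieves this after the same aperture-halving and finite-net reduction you already invoke, so I regard that as a presentational difference rather than a gap. (Note also that only the easy half of the Erd\H{o}s--R\'ev\'esz theorem is needed---the $(1-p_0)^k$ tail bound plus Borel--Cantelli---which is all that survives once the blocks are merely conditionally, not independently, successful.)

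The genuine gap is your opening reduction: the assertion that under the open set condition $\mu\bigl(B(x,r)\bigr)$ is comparable to $\mu(E_{\iii|_n})$ when $r\asymp\diam(E_{\iii|_n})$ is not justified for a general self-similar measure. The OSC controls the \emph{number} of comparable-diameter cylinders meeting $B(x,r)$ but allows cylinders to touch, and for non-uniform weights a neighbouring cylinder of diameter $\asymp r$ inside $B(x,r)$ can carry far more mass than $p_{\iii|_n}$; no constant-factor comparison holds (it does under strong separation, or for the natural measure, but not here). Your remark that ``enlarging $s$ slightly absorbs the comparability constants'' does not repair this, because the discrepancy is not a constant. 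What is true, and what the paper proves as a separate step, is an almost-sure polylogarithmic bound: using Schief's theorem one finds $\kkk\in\Sigma_*$ and $\delta>0$ with $\dist(E_{\iii\kkk},E\setminus E_\iii)>\delta\diam(E_\iii)$ for all $\iii\in\Sigma_*$, and a second Borel--Cantelli argument of exactly the same run-length type as the main one yields
\[
  \liminf_{n\to\infty}\frac{\mu(E_{\iii|_n})}{n^{-s_2}\,\mu\bigl(B(\pi(\iii),\diam(E_{\iii|_n}))\bigr)}=\infty
\]
for $\nu$-almost every $\iii$, after which the extra factor $n^{-s_2}$ is absorbed into the final exponent $s$. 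Without some such argument your proof bounds the cone mass from below only by a polylogarithmic fraction of the \emph{cylinder} mass, not of the \emph{ball} mass, and the stated conclusion does not follow.
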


Again, as a direct corollary, we obtain formula for the local dimensions via narrow cones.

\begin{corollary} \label{cor:selfsimilar}
  Let $\mu$ be a self-similar measure on a self-similar set $E \subset \R^n$ satisfying the open set condition. If $\mu$ is purely $m$-unrectifiable and $0 < \alpha \le 1$, then
  \begin{align*}
    \udimloc(\mu,x) &= \limsup_{r \downarrow 0} \sup_{\yli{\theta \in S^{n-1}}{V\in G(n,n-m)}} \frac{\log \mu\bigl( B(x,r) \cap X(x,V,\alpha) \setminus H(x,\theta,\alpha) \bigr)}{\log r}, \\
    \ldimloc(\mu,x) &= \liminf_{r \downarrow 0} \sup_{\yli{\theta \in S^{n-1}}{V\in G(n,n-m)}} \frac{\log \mu\bigl( B(x,r) \cap X(x,V,\alpha) \setminus H(x,\theta,\alpha) \bigr)}{\log r}
  \end{align*}
  for $\mu$-almost all $x \in \R^n$.
\end{corollary}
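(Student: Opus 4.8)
The plan is to derive Corollary \ref{cor:selfsimilar} from Theorem \ref{thm:selfsimilar}, exactly as Corollary \ref{thm:large_angle} was derived from Theorem \ref{thm:lower_density}. The strategy rests on the elementary observation that for a fixed cone $C(x) = X(x,V,\alpha) \setminus H(x,\theta,\alpha)$ we trivially have $\mu(B(x,r) \cap C(x)) \le \mu(B(x,r))$, so the supremum over $\theta, V$ of the conical quantity is always dominated by the unconditional local dimension. The content is therefore entirely in the reverse inequality, which is precisely where Theorem \ref{thm:selfsimilar} supplies the lower bound on the mass captured by the narrow cone.

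First I would treat the lower local dimension. Fix a point $x$ in the full-measure set given by Theorem \ref{thm:selfsimilar}, and abbreviate $\ell = \ldimloc(\mu,x)$. For any $\eps > 0$ and all sufficiently small $r$ we have $\mu(B(x,r)) \le r^{\ell - \eps}$ along a suitable sequence $r \downarrow 0$ realising the liminf, while the theorem gives
\begin{equation*}
  \sup_{\yli{\theta \in S^{n-1}}{V \in G(n,n-m)}} \mu\bigl( B(x,r) \cap X(x,V,\alpha) \setminus H(x,\theta,\alpha) \bigr) \ge |\log r|^{-s}\,\mu\bigl( B(x,r) \bigr)
\end{equation*}
for all small $r$. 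Taking logarithms, dividing by $\log r < 0$, and letting $r \downarrow 0$, the factor $\log |\log r|^{-s} / \log r \to 0$, so the logarithmic correction term $|\log r|^{-s}$ contributes nothing in the limit. Hence the liminf over $r$ of the supremum of $\log \mu(B(x,r) \cap C(x))/\log r$ equals the liminf of $\log\mu(B(x,r))/\log r = \ldimloc(\mu,x)$; combined with the trivial upper bound from the monotonicity of $\mu$, this yields the second displayed equality. The upper local dimension follows by the same computation, taking limsups instead of liminfs and again using that the $|\log r|^{-s}$ factor is logarithmically negligible.

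The main point to handle carefully is the interchange of the supremum over $\theta, V$ with the limit in $r$: the inequality from Theorem \ref{thm:selfsimilar} controls an infimum over cones of the ratio, which by compactness of $S^{n-1} \times G(n,n-m)$ is equivalent to a uniform lower bound holding simultaneously for all $\theta, V$ (cf.\ Remark \ref{rem:lower_density}(4)), so that the same $r$-sequence works for the optimal cone at each scale. I expect no genuine obstacle here, only the need to verify that the logarithmic factor $|\log r|^{-s}$ is absorbed in the limit and that the pointwise statement holds on a single full-measure set for both the upper and lower dimensions simultaneously, which follows by intersecting the two almost-sure sets. The argument is thus a routine corollary, the substance having been placed in Theorem \ref{thm:selfsimilar}.
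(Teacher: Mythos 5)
Your proposal is correct and matches the paper's (implicit) argument: the paper states the corollary as an immediate consequence of Theorem \ref{thm:selfsimilar}, precisely because the uniform-in-$(\theta,V)$ sandwich $|\log r|^{-s}\mu(B(x,r))\lesssim\mu(B(x,r)\cap X(x,V,\alpha)\setminus H(x,\theta,\alpha))\le\mu(B(x,r))$ makes the logarithmic correction vanish after dividing by $\log r$. No further comment is needed.
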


\begin{remark}
  (1) As one would expect, self-similar measures behave more regularly than general measures; compare Corollary \ref{cor:selfsimilar} to Proposition \ref{thm:example}. Observe also that there is no lower bound for the local dimension of the self-similar measure.

  (2) If $\mu$ is a self-similar measure, then for $\mu$-almost all $x \in \R^d$ with $\udimloc(\mu,x) > m$, the latter claim of Corollary \ref{cor:selfsimilar} follows from Feng and Hu \cite[Theorem 2.8]{FengHu2009} and Remark \ref{rem:lower_dim}(1) even without assuming the open set condition.

  (3) Mattila \cite{Mattila1982} has shown that a self-similar set $E$ either lies on an $m$-dimensional affine subspace or $\HH^t(E \cap M) = 0$ for every $m$-dimensional $C^1$-submanifold of $\R^n$. Here $t=\dimh(E)$. Further generalizations of this result can be found in \cite{Kaenmaki2003, Kaenmaki2006, BandtKravchenko2011}.

  (4) By inspecting the proof of Theorem \ref{thm:selfsimilar}, one is easily convinced that the result holds also for self-conformal sets.

  (5) An interesting question is whether Theorem \ref{thm:selfsimilar} remains true for every purely $1$-unrectifiable measure. Recall constructions presented in \cite[\S 5.3]{MartinMattila1988}, \cite[\S 5.8]{Preiss1987}, and \cite[Example 5.4]{CsornyeiKaenmakiRajalaSuomala2010}.
\end{remark}

\begin{proof}[Proof of Theorem \ref{thm:selfsimilar}]
  Let $f_1,\ldots,f_\kappa$ be the defining similitudes and $\nu$ the Bernoulli measure on $\Sigma$ for which $\pi\nu = \mu$. Let $0<r_1<\cdots<r_\kappa<1$ be the contraction ratios and $p = \min_{i \in \{ 1,\ldots,\kappa \}} \nu([i]) > 0$ the smallest Bernoulli weight. We show that there are $l \in \N$ and $\hhh \in \Sigma_l$ so that for each $\iii \in \Sigma_*$, $y \in E_{\iii\hhh}$, $V\in G(n,n-m)$, and $\theta \in S^{n-1}$, we have
  \begin{equation}\label{coneincl}
    E_{\iii\jjj} \subset X(y,V,\alpha) \setminus H(y,\theta,\alpha)
  \end{equation}
  for some $\jjj \in \Sigma_l$.

  To prove the above claim, we may assume that $\iii$ above is $\varnothing$ since by self-similarity, the claim is invariant under $f_\iii$ for all $\iii \in \Sigma_*$. By \cite[Remark 4.4]{CsornyeiKaenmakiRajalaSuomala2010} and the compactness of $S^{n-1}$, there are $V_1,\ldots,V_{M_1} \in G(n,n-m)$ and $\theta_1,\ldots,\theta_{M_2} \in S^{n-1}$ such that for any $V\in G(n,n-m)$ and $\theta \in S^{n-1}$ it holds that $X(0,V_i,\alpha/2) \subset X(0,V,\alpha)$ and $H(0,\theta,\alpha) \subset H(0,\theta_j,\alpha/2)$ for some $i \in \{ 1,\ldots,M_1 \}$ and $j \in \{ 1,\ldots,M_2 \}$. Now, since $\mu$ is purely $m$-unrectifiable, Lemma \ref{thm:pertti} implies that there are $x^1,y^1\in E$ such that $y^1 \in X(x^1,V_1,\alpha/3) \setminus H(x^1,\theta_1,\alpha/3)$. This in turn implies that for some $\hhh^1,\jjj^1\in\Sigma_*$, we have $x^1\in E_{\hhh^1}$, $y^1\in E_{\jjj^1}$, and $E_{\jjj^1} \subset X(y,V_1,\alpha/2) \setminus H(y,\theta_1,\alpha/2)$ for all $y\in E_{\hhh^1}$. Now, repeating this argument on $E_{\hhh^1}$, we find $\hhh^2,\jjj^2\in\Sigma_*$ such that $E_{\hhh^1\jjj^2} \subset X(y,V_1,\alpha/2) \setminus H(y,\theta_2,\alpha/2)$ for all $y\in E_{\hhh^1\hhh^2}$. Continuing in this way $M=M_1M_2$ times, we see that $\hhh=\hhh^1\cdots\hhh^M$ fulfills \eqref{coneincl}.

  For each $\iii \in \Sigma_*$, by applying \eqref{coneincl} in $E_{\iii\kkk\hhh}$ for all $\kkk \in \Sigma_{(k-1)l}$, we get the estimate
  \begin{align*}
    \mu\bigl( \bigl\{ x\in E_\iii : \;&\mu\bigl( E_{\iii} \cap X(x,V,\alpha) \setminus H(x,\theta,\alpha) \bigr) \le \gamma^k\mu(E_\iii) \\ &\text{for some } V \in G(n,n-m) \text{ and } \theta \in S^{n-1} \bigr\} \bigr) \le \mu(E_\iii)\bigl( 1 - \mu(E_\hhh) \bigr)^k,
  \end{align*}
  where $\gamma=p^l$, for all $k\in\N$. Let $k_n$ be the integer part of $-2 \log n/\log\bigl( 1-\mu(E_\hhh) \bigr)$ for all $n \in \N$ and define
  \begin{align*}
    A_n = \bigl\{ \pi(\iii) \in E : \;&\mu\bigl(  E_{\iii|_n} \cap X(x,V,\alpha) \setminus H(x,\theta,\alpha) \bigr) \le \gamma^{k_n}\mu(E_{\iii|_n}) \\ &\text{for some } V \in G(n,n-m) \text{ and } \theta \in S^{n-1} \bigr\}.
  \end{align*}
  Since $\sum_{n=1}^\infty \mu(A_n) = \sum_{n=1}^\infty \bigl( 1-\mu(E_\hhh) \bigr)^{k_n} < \infty$ the Borel-Cantelli lemma implies that $\mu$-almost every $x\in E$ belongs to only finitely many $A_n$. This means that for any $s_1 > 2\log\gamma/\log\bigl( 1-\mu(E_\hhh) \bigr)$ we have
  \begin{equation}\label{eq:cylinders}
    \liminf_{n \to \infty} \inf_{\yli{\theta \in S^{n-1}}{V\in G(n,n-m)}} \frac{\mu\bigl( E_{\iii|_n} \cap X(\pi(\iii),V,\alpha) \setminus H(\pi(\iii),\theta,\alpha) \bigr)}{n^{-s_1} \mu\bigl( E_{\iii|_n}\bigr)} = \infty
  \end{equation}
  for $\mu$-almost all $\iii \in \Sigma$.

  Since $\diam(E_{\iii|_n})\le r_{\kappa}^n \diam(E)$ we have $|\log\diam(E_{\iii|_n})| \ge cn$ for some constant $c>0$. Hence $n^{-1}$ in \eqref{eq:cylinders} can be replaced by $|\log\diam(E_{\iii|_n})|^{-1}$. It remains to show that the measure $\mu(E_{\iii|_n})$ in \eqref{eq:cylinders} can be replaced by $\mu\bigl(  B(\pi(\iii),\diam(E_{\iii|_n})) \bigr)$.

  Recalling that $E$ satisfies the open set condition, it follows from \cite[Theorem 2.1]{Schief1994} (see also \cite[Theorem 3.3]{LauRaoYe2001}, \cite[\S 3]{PeresRamsSimonSolomyak2001}, and \cite[Theorem 4.7]{KaenmakiVilppolainen2008}) that there are $\kkk\in\Sigma_*$ and $\delta>0$ such that
  \begin{equation*}
    \dist(E_{\iii\kkk},E \setminus E_\iii) > \delta\diam(E_\iii)
  \end{equation*}
  for all $\iii \in \Sigma_*$. This gives for each $\iii \in \Sigma_*$ and $k\in\N$ an estimate
  \[
    \mu\bigl( \bigl\{ x \in E_{\iii} : B(x,\delta r_{1}^{k|\kkk|}) \cap E \setminus E_\iii \ne \emptyset \bigr\} \bigr) \le \bigl( 1-\mu(E_\kkk) \bigr)^k.
  \]
  Applying Borel-Cantelli similarly as above implies that if $s_2 > |\kkk|\log p/\log\bigl( 1-\mu(E_\kkk) \bigr)$, then
  \[
    \liminf_{n \to \infty} \frac{\mu(E_{\iii|_n})}{n^{-s_2} \mu\bigl( B(\pi(\iii), \diam(E_{\iii|_n})) \bigr)} = \infty
  \]
  for $\mu$-almost every $\iii\in\Sigma$. Combining this with \eqref{eq:cylinders} finishes the proof.
\end{proof}

The following proposition shows that the exponent $s$ indeed depends on $\mu$ and that it is not in general possible to choose $s$ close to $1$.
  \begin{figure}
    \psfrag{f0}{$f_1$}
    \psfrag{f1}{$f_2$}
    \psfrag{f2}{$f_3$}
    \psfrag{f3}{$f_4$}
    \begin{center}
      \includegraphics[width=0.35\textwidth]{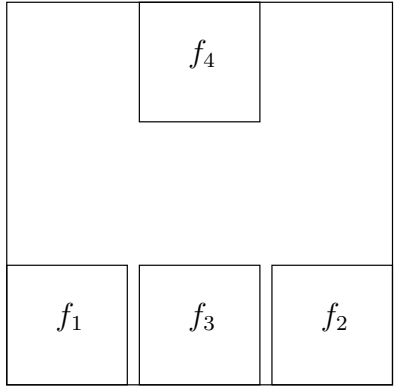}
    \end{center}
    \caption{The first level of the construction of the self-similar set in Proposition \ref{prop:selfsim}.}
    \label{fig:ifs}
  \end{figure}

\begin{proposition}\label{prop:selfsim}
  Let $\tfrac14<\lambda<\tfrac13$ and let $E$ be the self-similar set induced by the similitudes $\{ x \mapsto \lambda x + a_i \}_{i=1}^4$, where $a_1 = (0,0)$, $a_2=(1-\lambda,0)$, $a_3=((1-\lambda)/2,0)$, and $a_4=((1-\lambda)/2,1-\lambda)$; see Figure \ref{fig:ifs}. Suppose $0<p<\tfrac12$ and $\mu$ is the self-similar measure on $E$ corresponding to the Bernoulli weights $p_1=p_2=\tfrac{1-p}{2}$ and $p_3=p_4=\tfrac{p}{2}$. If $\ell$ is the $y$-axis and $\alpha=(1-3\lambda)/10$, then, for a constant $c>0$ independent of $p$, we have
  \[
    \liminf_{r \downarrow 0} \frac{\mu\bigl( B(x,r) \cap X(x,\ell,\alpha) \bigr)}{|\log r|^{-c/p}\mu\bigl( B(x,r) \bigr)} = 0
  \]
  for $\mu$-almost all $x\in E$.
\end{proposition}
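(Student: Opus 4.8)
The plan is to mirror the probabilistic proof of Proposition \ref{ex:sharp}: I reduce the statement to a combinatorial condition on the symbolic coding $\iii\in\Sigma=\{1,2,3,4\}^{\N}$ of $x=\pi(\iii)$ and extract the relevant scales from the Erd\H os--R\'ev\'esz longest--run theorem \cite{ErdosRevesz1977}. The favourable event will be a long run of the two \emph{corner} symbols $\{1,2\}$ (those of $f_1,f_2$), whose total $\nu$-weight is $p_1+p_2=1-p$ and which is therefore common. The geometric fact underlying everything is that the precise choice $\alpha=(1-3\lambda)/10$ forces, for a point $x$ lying in the bottom--left (or bottom--right) subsquare of a cylinder $E_\jjj$, none of the other three first--level pieces $E_{\jjj 2},E_{\jjj 3},E_{\jjj 4}$ to meet the vertical double cone $X(x,\ell,\alpha)$: the minimum of $|y_1-x_1|/|y-x|$ over those three pieces is bounded below by a constant depending only on $\lambda$, which for $\lambda<1/3$ exceeds $\alpha$ by a wide margin.

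I then exploit this self--similarly. Suppose $i_{n+1},\dots,i_{n+k}\in\{1,2\}$ is a run of corner symbols, and set $r=\diam E_{\iii|_{n+1}}=\lambda\diam E_{\iii|_n}$. Since $x\in E_{\iii|_{n+k}}\subset\cdots\subset E_{\iii|_{n+1}}$, the point $x$ sits in a nested chain of corner subsquares, so at every level $n+1\le j<n+k$ the geometric fact applies with $E_\jjj=E_{\iii|_j}$ and the corner $i_{j+1}$. Iterating gives
\[
  X(x,\ell,\alpha)\cap E_{\iii|_{n+1}}\subset E_{\iii|_{n+k}},
\]
so the $\mu$-mass of the upward part of the cone inside $E_{\iii|_{n+1}}$ is at most $\mu(E_{\iii|_{n+k}})=(\tfrac{1-p}{2})^{k-1}\mu(E_{\iii|_{n+1}})$. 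As $E_{\iii|_{n+1}}\subset B(x,r)$, this already bounds the numerator by $(\tfrac{1-p}{2})^{k-1}\mu(B(x,r))$, provided the cone meets no $\mu$-mass outside $E_{\iii|_{n+1}}$.

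Disposing of that last proviso is where the open set condition enters, and it is the main obstacle: I must check that at the single well--chosen scale $r=\diam E_{\iii|_{n+1}}$ the narrow cone sees nothing below $x$ or beside $E_{\iii|_{n+1}}$. Because $i_{n+1}\in\{1,2\}$, the cylinder $E_{\iii|_{n+1}}$ lies on the bottom edge of $E_{\iii|_n}$ and $x$ is its lowest point, so the downward half of the cone can only reach cylinders lying strictly below $E_{\iii|_n}$. Using the vertical gap $1-2\lambda$ produced by $f_4$ together with $r=\lambda\diam E_{\iii|_n}$, one verifies that the nearest such mass sits at distance at least $\tfrac{1-2\lambda}{\sqrt2\,\lambda^2}\,r>r$ from $x$ (the inequality holds since $\lambda<1/3$), hence outside $B(x,r)$; and the separation $\dist(E_{\iii\kkk},E\setminus E_\iii)>\delta\diam E_\iii$ of \cite{Schief1994} (already used in the proof of Theorem \ref{thm:selfsimilar}) keeps every neighbouring cylinder outside $B(x,r)$ or, being a horizontal neighbour, outside the cone. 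Consequently
\[
  \frac{\mu\bigl(B(x,r)\cap X(x,\ell,\alpha)\bigr)}{\mu\bigl(B(x,r)\bigr)}\le\Bigl(\tfrac{1-p}{2}\Bigr)^{k-1}.
\]

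Finally I feed in the run lengths. By \cite{ErdosRevesz1977}, for $\nu$-almost every $\iii$ the length $R_m$ of the longest run of corner symbols in $i_1\cdots i_m$ satisfies $R_m/\log m\to 1/\log\tfrac{1}{1-p}$; in particular there are infinitely many record positions $m$ at which a corner run of length $k\ge(1-\eps)\log m/\log\tfrac{1}{1-p}$ ends. Applying the previous bound there, with $r\approx\lambda^{m-k}$ so that $\log|\log r|\sim\log m$, gives
\[
  \frac{\mu\bigl(B(x,r)\cap X(x,\ell,\alpha)\bigr)}{\mu\bigl(B(x,r)\bigr)}\le\Bigl(\tfrac{1-p}{2}\Bigr)^{k-1}\le|\log r|^{-(1-\eps')\left(1+\log 2/\log\frac{1}{1-p}\right)}
\]
infinitely often. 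Since $\log\tfrac{1}{1-p}\le 2p$ for $0<p<\tfrac12$, the exponent is at least $(1-\eps')\tfrac{\log 2}{2p}$, so any fixed constant $c<\tfrac12\log 2$ (for instance $c=\tfrac14\log 2$, independent of $p$) makes the right--hand side $o\bigl(|\log r|^{-c/p}\bigr)$ along these scales, which yields the claimed $\liminf=0$.
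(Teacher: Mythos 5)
Your argument is correct and follows essentially the same route as the paper's proof: a run of $k$ consecutive corner symbols $1,2$ confines the cone $X(x,\ell,\alpha)$ inside the ambient cylinder to a subcylinder of relative measure $\bigl(\tfrac{1-p}{2}\bigr)^{k-1}$, the gap structure of the construction transfers this from cylinders to balls, and the Erd\H{o}s--R\'ev\'esz longest-run theorem supplies infinitely many scales with $k \gtrsim \log n/(2p)$, exactly as in the paper. The only differences are cosmetic --- you work at radius $r=\diam E_{\iii|_{n+1}}$ and exclude neighbouring cylinders explicitly (vertical neighbours leave the ball, horizontal ones leave the cone), whereas the paper shrinks the radius by a fixed factor via strong separation --- and your overstatement that $x$ is the lowest point of $E_{\iii|_{n+1}}$ is harmless, since only its $O(\lambda^{k})$-proximity to the bottom edge is actually used.
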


\begin{proof}
  For $x = \pi(\iii) \in E$ and $n\in\N$ denote by $Z_n(x)$ the length of the longest subword of $\iii|_n$ that contains only symbols $1$ and $2$. It holds that
  \begin{equation}\label{thm:Revesz}
    \lim_{n \to\infty} \frac{Z_{n}(x)}{\log n}=\frac{1}{|\log(1-p)|}
  \end{equation}
  for $\mu$-almost all $x\in E$. This statement is proved by replacing $\log_2$ in the proof of \cite[Theorem 7.1]{Revesz2005} by $\log_{1/(1-p)}$. 

  Let $\jjj\in\Sigma_n$ and suppose that the subword consisting the last $k$ symbols of $\jjj$ contains only $1$ and $2$. Then it follows that
  \[
    \mu\bigl( E_{\jjj|_{n-k}}\cap X(x,\ell,\alpha) \bigr) \le 2^{-k}\mu(E_{\jjj|_{n-k}})
  \]
  for all $x\in E_\jjj$ and thus, relying on the strong separation condition, we find $c_1>0$ such that also
  \begin{equation}\label{eq:balestimate}
    \mu\bigl( B(x,c_1\lambda^{n-k}) \cap X(x,\ell,\alpha) \bigr) \le 2^{-k}\mu\bigl( B(x,c_1\lambda^{n-k}) \bigr)
  \end{equation}
  for all $x\in E_\jjj$. From \eqref{thm:Revesz} it follows that for $\mu$-almost every $x=\pi(\iii)$, we find infinitely many $n\in\N$ such that for $\jjj=\iii|_n$ the estimate \eqref{eq:balestimate} holds with $k>\tfrac{\log n}{2p}$. A simple calculation then implies the claim for any choice of $0<c<\tfrac12$.
\end{proof}

\begin{remark}
  Given a self-similar measure $\mu$ satisfying the assumptions of Theorem \ref{thm:selfsimilar}, we can define $s(\mu)$ as the infimum of $s>1$ for which the claim of Theorem \ref{thm:selfsimilar} holds. In the view of Proposition \ref{prop:selfsim}, it is natural to ask what is the relation between $s(\mu)$ and the defining similitudes and the Bernoulli weights. This question is interesting already for the four corner Cantor set and its natural measure.
\end{remark}


\end{document}